\theoremstyle{theorem}
\newtheorem{theorem}{Theorem}[section]
\newtheorem{proposition}[theorem]{Proposition}
\newtheorem{corollary}[theorem]{Corollary}
\newtheorem{lemma}[theorem]{Lemma}
\theoremstyle{definition}
\newtheorem{definition}[theorem]{Definition}
\newtheorem{example}[theorem]{Example}
\theoremstyle{remark}
\newtheorem{remark}[theorem]{Remark}
\newcommand{\al}{\alpha}
\newcommand{\be}{\beta}
\newcommand{\de}{\delta}
\newcommand{\ka}{\kappa}
\newcommand{\la}{\lambda}
\newcommand{\om}{\omega}
\newcommand{\vp}{\varphi}
\newcommand\vka{\kappa}
\newcommand\Om\Omega
\newcommand\Te\Theta
\newcommand{\De}{\Delta}
\newcommand{\Ga}{\Gamma}
\newcommand{\La}{\Lambda}
\newcommand{\Si}{\Sigma}
\newcommand{\hf}{\hat{f}}
\newcommand{\hg}{\hat{g}}
\newcommand{\htau}{\hat{\tau}}
\newcommand{\hF}{\widehat{F}}
\newcommand{\hcM}{\widehat{\mathcal M}}
\newcommand{\hcS}{\widehat{\mathcal S}}
\def\CC{\mathbb{C}}
\def\NN{\mathbb{N}}
\def\RR{\mathbb{R}}
\renewcommand\SS{\mathbb S}
\newcommand{\cD}{{\mathcal D}}
\newcommand{\cM}{{\mathcal M}}
\newcommand{\cR}{{\mathcal R}}
\newcommand{\cS}{{\mathcal S}}
\newcommand{\pd}{\partial}
\newcommand\minus\backslash
\newcommand{\ms}{\mspace{1mu}}
\newcommand\lan\langle
\newcommand\ran\rangle
\newcommand{\sign}{\operatorname{sign}}
\newcommand{\spec}{\operatorname{spec}}
\newcommand{\diam}{\operatorname{diam}}
\newcommand\Dom{\operatorname{Dom}}
\newcommand{\I}{{\mathrm i}}
\newcommand{\loc}{_{\mathrm{loc}}}
\newcommand{\dd}{{\mathrm d}}
\DeclareMathOperator\Real{Re}
\renewcommand\Re\Real
\renewcommand\leq\leqslant
\renewcommand\geq\geqslant
\renewcommand\Im{\operatorname{Im}}
\newcommand\sk{\sqrt k}
\newcommand\ska{\sqrt \vka}
\newcommand\dom{D}
\newcommand\hA{\hat A}
\newcommand\hB{\hat B}
\newcommand\ha{\hat a}
\newcommand\hb{\hat b}
\title{\bf Green's function for the Hodge Laplacian on some classes of Riemannian and Lorentzian symmetric spaces}
\author{Alberto Enciso$^1$\thanks{aenciso@fis.ucm.es} \and Niky Kamran$^2$\thanks{nkamran@math.mcgill.ca}}
\date{\small$^1$ Depto.~de F{\'\i}sica Te\'orica II, Universidad Complutense, 28040 Madrid, Spain\vspace{1ex}\\
$^2$ Dept.\ of Mathematics and Statistics, McGill University, Montr\'eal, Qu\'ebec, Canada H3A 2K6}
\begin{document}\maketitle

\begin{abstract}
We compute the Green's function for the Hodge Laplacian on the symmetric spaces $M\times\Si$, where $M$ is a simply connected $n$-dimensional Riemannian or Lorentzian
manifold of constant curvature and $\Si$ is a simply connected Riemannian surface of constant curvature. Our approach is based on a generalization to the case of
differential forms of the method of spherical means and on the use of Riesz distributions on manifolds. The radial part of the Green's function is governed by a fourth order analogue of the Heun equation.
\end{abstract}

\section{Introduction}

Our purpose in this paper is to compute the Green's function for the
Hodge Laplacian in some special classes of symmetric spaces of
Riemannian or Lorentzian signature, more precisely those spaces
which are products $M\times\Si$, where $M$ is a simply connected
$n$-dimensional Riemannian or Lorentzian manifold of constant
curvature and $\Si$ is a simply connected Riemannian surface of
constant curvature $\vka$~\cite{ON83}. The spaces we consider are
thus not of constant curvature and they are not conformally flat,
unless one of the factors reduces to a point. They contain in
particular the Robinson--Bertotti solutions of the Einstein--Maxwell
equations, as well as the de Sitter and anti-de Sitter spaces, the
latter spaces corresponding to cases in which $\Si$ reduces to a
point and $M$ is of Lorentzian signature.

The approach we take is based on a natural extension to the case of
differential forms of the classical method of spherical
means~\cite{Gu88}, which has been applied with great success to the
computation of the Green's function for the scalar wave equation in
symmetric spaces~\cite{He84}. The method of spherical means has the
advantage of leading naturally to the use of Riesz distributions,
which are fundamental analytical tools in the study of linear wave
equations on manifolds. Thus we avoid the need for special ans\"atze
on the form of the action of the Hodge Laplacian on the Green's
function, as is often the case in the Physics
literature~\cite{AJ86,Fo92}. In the case of spaces of constant
curvature, Riesz distributions provide a rigorous way of reducing
the computation of the Green's function to the study of an ordinary
differential equation governing its radial part. We will see that a
similar reduction occurs in the case of the product spaces
considered in this paper.

The computation of the Green's function for the Hodge Laplacian in
de Sitter and anti-de Sitter space is a problem of considerable
interest in Physics, where differential forms appear as basic
variables in various supersymmetric field theories, including
supergravity. In this context, the Green's function corresponds to
the propagator of the theory, which is the basic object of interest.
To our knowledge, the earliest results on this problem appear in
papers by Allen and Jacobson~\cite{AJ86} and Folacci~\cite{Fo91}.
Their results have been used in the recent works of a number of
physicists, e.g.~\cite{Ca92,Ha00,BU06}. The case of product
manifolds seems to have received less attention in the literature
and presents challenges of its own, as we shall see below. Beyond
the interest of this problem in Physics, our motivation has been to
illustrate the power of the method of spherical means as applied to
operators acting on sections of vector bundles, and to show that it
can be adapted efficiently to the case of product manifolds.

One of the main features of our result is that in the case in which
neither $\Si$ nor $M$ is reduced to a point, the expression for the
Green's function of the Hodge Laplacian acting on $p$-forms (with
$1\leq p \leq n-1$) has its radial part governed by a fourth order
ordinary differential equation with four regular singular points,
that is a ``higher order Heun equation'', which appears to be new.
This is in contrast with the case in which $\Si$ reduces to a point,
where the ordinary differential equations for the radial part of the
Green's function (which are coupled by the Levi-Civita connection
terms) can nevertheless be decoupled by a suitably chosen
differential substitution into second-order equations of
hypergeometric type. In all cases, we obtain a closed form
expression for the Green's function as a bundle-valued distribution.

There are some natural extensions that one could pursue for the
results that we have obtained in our paper. Of immediate interest
would be the extension of the method of computation of the Green's
function for the Hodge Laplacian to warped products of spaces of
constant curvature. One could also develop the formalism of our
paper in the case of the Laplacian on spinors, along the lines
of~\cite{AL86} either in the plain or warped product cases. A more
challenging objective, which would perhaps be of less interest in
Physics, would be to adapt to the case of differential forms of
arbitrary degree the analysis carried out by Lu~\cite{Lu91} for the
Green's function for the Laplacian on 1-forms over symmetric spaces
which are not maximally symmetric, such as the complex hyperbolic
spaces and some classical symmetric domains of higher rank.

Our paper is organized as follows. In Section \ref{S:sphericalRiem},
we briefly recall the definition and key properties of the spherical
means operators on differential forms defined on Riemannian space
forms, as well as their relation to the action of the Hodge
Laplacian. Section \ref{S:sphericalLor} contains a concise account
of the method of spherical means in the Lorentzian case, and
introduces the representation of the fundamental solution of the
wave equation in Lorentzian space forms in terms of Riesz
distributions. In Section \ref{S:Riem}, we consider the case of
Riemannian products, where we establish in Theorem \ref{T:mainR}
the expression of the Green's function for the Hodge Laplacian in
terms of the spherical means operators and the solutions of a
fourth-order ordinary differential equation with four regular
singular points, that is a ``fourth-order Heun equation'' which has
not been studied in the literature as far as we know. An important
element in our proof is a spectral decomposition result (Proposition
\ref{P:spectral}) for the second-order operator of singular
Sturm-Liouville type corresponding to the radial part of the Hodge
operator on surfaces, which we prove in Section \ref{S:dim2}. In
Section \ref{S:Lor}, we establish using Riesz distributions
the expression of the Green's function for the Hodge Laplacian in the
case of Lorentzian products. This is done in Theorem
\ref{T:mainLor}, and also involves a fourth-order radial equation
of Heun type which arises now from the use of Riesz potentials.
Finally, in Section \ref{S:constant}, we briefly show for the sake
of completeness that in the case in which the surface $\Si$ reduces
to a point, that is when the manifold is a simply connected
Riemannian or Lorentzian space form, the Green's function obtained
through our approach is expressible in terms of the solutions of a
second-order equation of hypergeometric type, as shown in
\cite{AJ86} and \cite{Fo92}.

\section{Spherical means on differential forms defined on Riemannian space forms}
\label{S:sphericalRiem}

In this section we shall briefly review some results on spherical
means for differential forms defined on the simply connected
Riemannian space form $M$ of constant sectional curvature $k$.
Proofs and further details can be found in Ref.~\cite{Gu88}. We
denote by $B_M(x,r)$ the ball centered at the point $x\in M$ of
radius $r$ and set $S_M(x,r)=\pd B_M(x,r)$. It is well known that
$S_M(x,r)$ is diffeomorphic to a sphere for any $0<r<\diam(M)$ and
that these spheres foliate $M\minus\{x\}$ (minus the antipodal point
of $M$ is positively curved). We use the notation $\dd S$ for the
induced area measure on $S_M(x,r)$, whose total area is
\begin{equation}\label{area}
m(r):=\big|S(x,r)\big|=\big|\SS^{n-1}\big|\Bigg(\frac{\sin \sk\, r}{\sk}\Bigg)^{n-1}\,.
\end{equation}
Throughout this paper we shall use the determination of the square
root with nonnegative imaginary part, which has a branch cut on the
positive real axis and is holomorphic in $\CC\minus[0,+\infty)$. We
also recall that the injectivity radius of $M$ coincides with
its diameter, which is $+\infty$ if $k\geq0$ and
$\pi/\sqrt{-k}$ if $k<0$. The space of smooth $p$-forms (resp.\ of
compact support) on $M$ will be denoted by $\Om^p(M)$ (resp.\
$\Om^p_0(M)$).

Let $\rho\in C^0(M\times M)$ be the distance function on $M$. For any degree $p$ we define double differential $p$-forms $\tau_p,\htau_p$ by setting
\begin{subequations}\label{tau}
\begin{align}
&\tau_0(x,x'):=1\,,\qquad\tau_1(x,x'):=-\frac{\sin\sk\,r}{\sk}\,\dd\dd'\rho(x,x')\,,\\
&\tau_p(x,x'):=\frac1p\tau_{p-1}(x,x')\wedge\!\wedge'\;\tau_1(x,x')\quad\text{for }\;2\leq p\leq n
\end{align}
and
\begin{align}
&\htau_0(x,x'):=0\,,\qquad\htau_1(x,x'):=-\dd \rho(x,x')\;\dd'\rho(x,x')\,,\\
&\htau_p(x,x'):=\htau_{1}(x,x')\wedge\!\wedge'\;\htau_{p-1}(x,x')\quad\text{for }\;2\leq p\leq n
\end{align}
\end{subequations}
at the points $(x,x')\in M\times M$ where the distance function is smooth. In the above formulas accented and unaccented operators act on $x'$ and $x$, respectively. Globally, this defines bundle-valued distributions $\tau_p,\htau_p\in\cD'(M\times M,\La^p(T^*M)\boxtimes \La^p(T^*M))$, where $\La^p(T^*M)\boxtimes \La^p(T^*M)$ is the vector bundle over $M\times M$ whose fiber at $(x,x')$ is $\La^p(T^*_xM)\otimes \La^p(T^*_{x'}M)$.

Given a smooth section $\Psi$ of $\La^p(T^*M)\boxtimes \La^p(T^*M)$ and $\om\in\Om^p(M)$, we introduce the notation
\begin{align*}
\Psi(x,x')\cdot\om(x)&:=*\big(\Psi(x,x')\wedge *\,\om(x)\big)\,,\\
\Psi(x,x')\cdot'\om(x')&:=*'\big(\Psi(x,x')\wedge' *'\om(x')\big)
\end{align*}
for the left and right inner products $\Gamma(\La^p(T^*M)\boxtimes \La^p(T^*M))\times\Om^p(M)\to C^\infty(M)\otimes\Om^p(M)$. Left and right products can be also defined for $\Psi\in\cD'(M\times M,\La^p(T^*M)\boxtimes \La^p(T^*M))$ and $\om\in\Om^p(M)$.

An important property of the double differential forms defined above
is that if $x'$ does not belong to the cut locus of $x$, then the
sum $\tau_p(x,x')+\htau_p(x,x')$ provides the parallel transport
operator for $p$-forms along the unique minimal geodesic connecting
$x$ and $x'$. More precisely, consider $\om_0\in\La^p(T^*_xM)$ and
let $\gamma$ be the minimal geodesic with $\gamma(0)=x$ and
$\gamma(1)=x'$. Then
$[\tau_p(x,x')+\htau_p(x,x')]\cdot\om_0=\tilde\om(1)$, where the
smooth map $\tilde\om:[0,1]\to\La^p(T^*M)$ satisfies the
differential equation for parallel transport along $\gamma$, which
with a slight abuse of notation can be written as
\[
\nabla_{\dot\gamma(t)}\tilde\om(t)=0\,,\qquad \tilde\om(0)=\om_0\,.
\]
In particular, $\tau_p(x,x)+\htau_p(x,x)$ is the identity map in $\La^p(T^*_xM)$.

\begin{definition}\label{D:RSM}
Let $0<r<\diam(M)$. The (Riemannian) {\em spherical means} of a
smooth $p$-form  $\om \in \Om^p(M)$ on the sphere of radius $r$ are
defined as
\begin{subequations}\label{cM}
\begin{align}
\cM_r\om(x)&:=\frac{1}{m(r)}\int_{S(x,r)}\tau_p(x,x')\cdot'\om(x')\,\dd
S(x')\,,\\
\hcM_r\om(x)&:=\frac{1}{m(r)}\int_{S(x,r)}\htau_p(x,x')\cdot'\om(x')\,\dd
S(x')\,.
\end{align}
\end{subequations}
\end{definition}

\begin{example}\label{ex1}
If $\om\in\Om^0(M)$, then $\hcM_r\om=0$ and $\cM_r\om$ coincides
with the usual spherical mean on constant curvature
spaces~\cite{He84}, which is customarily written as
\[
\cM_r\om(x)=\frac1{|\SS^{n-1}|}\int_{\SS^{n-1}} \om(x+r\theta)\,\dd S(\theta)
\]
in Euclidean space. If $\om\in\Om^n(M)$, $\cM_r\om=0$ and $*(\hcM_r\om)=\cM_r(*\ms\om)$.
\end{example}

We shall hereafter denote by $\De_M:=\dd\,\dd^*+\dd^*\dd$ the
(positive) Hodge Laplacian in $M$ and consider the measure on
$(0,\diam(M))$ given by $\dd\mu_M(r):=m(r)\,\dd r$. From the point
of view of their applications to the Hodge Laplacian, the key
property of the spherical means is given by the
relation~\cite[p.~47]{Gu88}
\begin{multline}\label{keyRiem}
\De_M\int_0^{\diam(M)}\Big(f(r)\,\cM_r\om+\hf(r)\,\hcM_r\om\Big)\,\dd\mu_M(r)=\\
\int_0^{\diam(M)}\Big(F(f,\hf)(r)\,\cM_r\om+\hF(f,\hf)(r)\,\hcM_r\om\Big)\,\dd\mu_M(r)\,,
\end{multline}
where $\om\in\Om^p(M)$ and $f,\hf$ are arbitrary smooth functions for which the latter integrals converge. The functions $F(f,\hf),\hF(f,\hf)$ are given by
\begin{subequations}\label{Fs}
\begin{align}
F(f,\hf)(r)&:=L_Mf(r)+pk\Big[\big(2\csc^2\sk r+n-p-1\big)f(r)-2\cos\sk
r\csc^2\sk r\,\hf(r)\Big]\,,\\
\hF(f,\hf)(r)&:=L_M \hf(r)+(n-p)k\Big[\big(2\csc^2\sk
r+p-1\big)\hf(r)-2\cos\sk r\csc^2\sk r\,f(r)\Big]\,,
\end{align}
\end{subequations}
$L_M$ being the radial Laplacian
\begin{equation}\label{LM}
-L_M:=\frac{\pd^2}{\pd r^2}+(n-1)\sk\cot\sk r\,\frac\pd{\pd r}\,.
\end{equation}
This formula remains valid for $f\in L^1_{\rm loc}((0,\diam(M)),\dd\mu_\Si)$ if $\om$ is compactly supported and the above derivatives and integrals are understood in the sense of distributions.

\section{The method of spherical means in the Lorentzian case and Riesz potentials}
\label{S:sphericalLor}

In this section we shall review the extension of the method of
spherical means to Lorentzian spaces of constant curvature and some
useful properties of Riesz potentials. Again we refer to~\cite{Gu88}
for details. We shall denote by $M$ the simply connected Lorentzian
manifold of constant sectional curvature $k$ and signature
$(-,+,\cdots,+)$. The Minkowski ($k=0$) and de Sitter spaces ($k>0$)
are globally hyperbolic, but the anti-de Sitter space ($k<0$) is not
(it is, however, strongly causal~\cite{ON83}). For this reason it is
convenient to consider a domain $\dom\subset M$ which will be fixed
during the ensuing discussion. When $k\geq0$, $\dom$ can be taken to
be the whole space $M$, whereas when $k<0$ we shall assume that
$\dom$ is a geodesically normal domain of $M$~\cite{Gu88}, that is,
a domain which is a normal neighborhood of each of its points.

We introduce the notation $J^+_\dom(x)$ for the points in $\dom$
causally connected with $x$, i.e., the points $x'\in\dom$ such that
there exists a future directed, non-spacelike curve from $x$ to
$x'$. Furthermore, we set
\begin{align*}
S^+_\dom(x,r):=\big\{x'\in J_\dom^+(x):\rho(x,x')=r\big\}
\end{align*}
and call $\dd S$ its induced area measure. Obviously
\begin{equation}\label{foliationLor}
J^+_\dom(x)=\bigcup_{r\geq0}S_\dom^+(x,r)\,,\qquad \text{with }\; S^+_\dom(x,r)\cap S^+_\dom(x,r')=\emptyset\;\text{ if }\;r\neq r'\,.
\end{equation}
We also use the notation
$\rho:\dom\times\dom\to[0,+\infty)$ for the Lorentzian distance
function.

\begin{definition}\label{D:Riesz}
The {\em Riesz potential} at $x$ with parameter $\al$ is the distribution $R^\al_{\dom,x}\in\cD'(\dom)$ defined for $\Re\al\geq n$ by the integral
\begin{equation}\label{Riesz}
R^\al_{\dom,x}(\phi):=C_\al\int_{J^+_\dom(x)}\rho(x,x')^{\al-n}\phi(x')\,\dd
x'\,,
\end{equation}
where $\phi$ is an arbitrary smooth function compactly supported in
$\dom$, $\dd x'$ stands for the Lorentzian volume element and we have
set
\begin{equation}\label{Cal}
C_\al:=\frac{\pi^{1-\frac
n2}2^{1-\al}}{\Gamma(\frac\al2)\Gamma(\frac{\al-n}2+1)}
\end{equation}
\end{definition}

For the sake of completeness, let us recall that the map $\al\mapsto
R^\al_{\dom,x}$ is holomorphic in $\CC$ provided that $\al\mapsto
R^\al_{\dom,x}(\phi)$ is an entire function for any smooth function
$\phi$ with compact support in $\dom$. A basic result in the theory
of Riesz potentials is the following theorem~\cite{Ri51,Gu88}.

\begin{theorem}\label{T:Riesz}
For any $x\in\dom$, the map $\al\mapsto R^\al_{\dom,x}$ can be holomorphically extended to the whole complex plane. Moreover, $\De_M R^\al_{\dom,x}=R^{\al-2}_{\dom,x}$ and $R^0_{\dom,x}=\de_x$.
\end{theorem}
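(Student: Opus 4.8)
The plan is to establish the three assertions in the order: (i) convergence and holomorphy of $\al\mapsto R^\al_{\dom,x}$ on the half-plane $\Re\al>n$; (ii) the recursion $\De_M R^\al_{\dom,x}=R^{\al-2}_{\dom,x}$ on that half-plane; (iii) the holomorphic continuation to all of $\CC$, obtained by running (ii) backwards; and (iv) the identification $R^0_{\dom,x}=\de_x$. The engine of the whole argument is the single identity (ii) together with the formal self-adjointness of $\De_M$. For step (i), when $\Re\al>n$ the integrand $\rho(x,x')^{\al-n}$ is locally bounded on the compact set $J^+_\dom(x)\cap\supp\phi$, so the defining integral converges absolutely and yields a distribution in $\cD'(\dom)$; since the normalising factor $C_\al$ is entire (the reciprocal Gamma factors $1/\Gamma(\tfrac\al2)$ and $1/\Gamma(\tfrac{\al-n}2+1)$ are entire, as are $2^{1-\al}$ and $\pi^{1-n/2}$) and $\al\mapsto\rho^{\al-n}$ is holomorphic with locally integrable $\al$-derivative, differentiation under the integral sign shows $\al\mapsto R^\al_{\dom,x}(\phi)$ is holomorphic there.

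For the recursion, I would first take $\Re\al$ large enough that both sides are given by classical convergent integrals, and reduce to computing $\De_M$ on the radial profile $\rho^{\al-n}$ on the open region where $\rho$ is smooth and positive. Using the radial form of the Hodge Laplacian on functions of the geodesic distance one obtains $\De_M\big(\rho^{\al-n}\big)=(\al-n)(\al-2)\,\rho^{\al-n-2}$, and the normalisation is tuned precisely so that $C_\al(\al-n)(\al-2)=C_{\al-2}$, which is exactly the functional equation $\Gamma(z+1)=z\Gamma(z)$ applied to both Gamma factors of $C_\al$. Recasting this as the pairing identity $R^{\al-2}_{\dom,x}(\phi)=R^\al_{\dom,x}(\De_M\phi)$ (using formal self-adjointness and that $\phi$ is compactly supported in $\dom$) yields (ii) on $\Re\al>n$. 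The delicate point here is that the identity must hold \emph{distributionally}: applying $\De_M$ must produce no spurious contribution concentrated on the light cone $\{\rho=0\}$ or at the vertex $x$. This is precisely the step that the analytic continuation makes automatic.

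With (ii) in hand, continuation is routine. Iterating the pairing identity gives
\[
R^\al_{\dom,x}(\phi)=R^{\al+2m}_{\dom,x}\big(\De_M^{\,m}\phi\big)
\]
for every $m\in\NN$, and the right-hand side is holomorphic in $\al$ on $\Re\al>n-2m$. These definitions agree on overlaps, so together they produce a single entire extension $\al\mapsto R^\al_{\dom,x}$; the recursion $\De_M R^\al_{\dom,x}=R^{\al-2}_{\dom,x}$ then holds on all of $\CC$ by the identity theorem, which in particular upgrades (ii) to the full distributional statement without any separate boundary analysis.

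Finally I would identify $R^0_{\dom,x}$. Setting $\al=2$ in the recursion gives $\De_M R^2_{\dom,x}=R^0_{\dom,x}$, so the claim $R^0_{\dom,x}=\de_x$ is equivalent to showing that $R^2_{\dom,x}$ is a fundamental solution of $\De_M$ with source at $x$. The mechanism is that the analytically continued functional $\al\mapsto\int_{J^+_\dom(x)}\rho^{\al-n}\phi\,\dd x'$ develops a pole as $\Re\al\downarrow0$, while the factor $1/\Gamma(\tfrac\al2)$ in $C_\al$ vanishes there; the product stays finite, and its value is governed by the residue of that pole, which localises at the vertex $x$. Carrying out this residue computation — controlling the continued integral near $\al=0$ and showing that the surviving contribution concentrates at $x$ with the correct constant — is the main obstacle of the proof, and it is exactly where the precise value of $C_\al$ (in particular the $\pi^{1-n/2}$ and $2^{1-\al}$ factors) is needed to normalise the limit to $\de_x$ rather than to a nonzero multiple of it.
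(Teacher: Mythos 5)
The paper offers no proof of this theorem to compare against: it is quoted directly from Riesz~\cite{Ri51} and G\"unther~\cite{Gu88}. Judged on its own terms, your outline is the correct classical argument \emph{in the flat case} $k=0$: convergence and holomorphy on $\Re\al>n$, the recursion as the engine of the continuation, and the residue computation at $\al=0$ (where the zero of $1/\Gamma(\al/2)$ cancels the pole of the continued integral coming from the vertex) are organized exactly as in Riesz's original treatment, and your verification $C_\al(\al-n)(\al-2)=C_{\al-2}$ is right.

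The gap is in your step (ii), and it matters because $M$ here has constant curvature $k$ which the paper allows to be nonzero. The pointwise identity $\De_M\big(\rho^{\al-n}\big)=(\al-n)(\al-2)\rho^{\al-n-2}$ holds only for $k=0$: by the paper's own radial operator~\eqref{LM}, the coefficient of the first-order term is $(n-1)\sk\cot\sk r$ rather than $(n-1)/r$, so (up to the overall sign conventions implicit in~\eqref{LM}) one actually finds
\[
\De_M\big(\rho^{\al-n}\big)=(\al-n)(\al-2)\,\rho^{\al-n-2}+(n-1)(\al-n)\Big(\sk\cot\sk\rho-\tfrac1\rho\Big)\rho^{\al-n-1}\,,
\]
and the second term is a nonzero smooth multiple of $\rho^{\al-n}$ when $k\neq0$ (it equals $-\tfrac{k}{3}(n-1)(\al-n)\rho^{\al-n}+O(\rho^{\al-n+2})$). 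Hence with the fixed normalization $C_\al$ one only gets $\De_M R^\al_{\dom,x}=R^{\al-2}_{\dom,x}+(\text{a term of the same order as }R^\al_{\dom,x})$; the clean two-term recursion is a flat-space phenomenon, and indeed \cite{Ri51} (Ch.~IV), \cite{Gu88} and \cite{BGP07} all state a corrected recursion in the curved case. Since your continuation (iii) and your identification of $R^0_{\dom,x}$ (iv) are both bootstrapped from this recursion, the proof as written does not establish the theorem for $k\neq0$, and no sharper computation of $\De_M\rho^{\al-n}$ will rescue it. What does survive for $k\neq0$ --- and what the paper actually uses later, in the construction of $\widetilde\cR^\al_{x,\dom}$ in Section~\ref{S:Lor} --- is the entire continuation and $R^0_{\dom,x}=\de_x$; these must be proved directly from the radial representation $R^\al_{\dom,x}(\phi)=C_\al\int_0^\infty r^{\al-n}m(r)\,\cM_r\phi(x)\,\dd r$ by integrating by parts in $r$ against the smooth density $r^{1-n}m(r)$ and tracking the cancellation of the resulting poles by the two Gamma factors in $C_\al$, rather than by iterating an exact recursion. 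The curvature corrections that break your step (ii) are precisely what the paper later absorbs into the inhomogeneous radial system~\eqref{FhFLor}.
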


\begin{corollary}\label{C:Riesz}
For any $\phi\in C^\infty_0(\dom)$, $u(x):=R_{\dom,x}^2(\phi)$ solves the hyperbolic equation $\De_Mu=\phi$ in $\dom$.
\end{corollary}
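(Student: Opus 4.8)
The plan is to show that, for fixed $\phi\in C^\infty_0(\dom)$, the function $u_\al(x):=R^\al_{\dom,x}(\phi)$ of the base point $x$ satisfies the recursion $\De_M u_\al=u_{\al-2}$ for every $\al\in\CC$, and then to specialize to $\al=2$. Granting this recursion, the rest is immediate: with $\al=2$ one gets $\De_M u=\De_M u_2=u_0$, and since $R^0_{\dom,x}=\de_x$ we have $u_0(x)=R^0_{\dom,x}(\phi)=\phi(x)$, which is the claim. Thus the entire content of the corollary is the transfer of the relation $\De_M R^\al_{\dom,x}=R^{\al-2}_{\dom,x}$ of Theorem \ref{T:Riesz}, which concerns the action of $\De_M$ in the field variable $x'$, to the action of $\De_M$ in the base point $x$.

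To carry out this transfer I would test $\De_M u_\al$ against an arbitrary $\psi\in C^\infty_0(\dom)$ and use that $\De_M$ is formally self-adjoint, writing $\int_\dom \De_M u_\al(x)\,\psi(x)\,\dd x=\int_\dom u_\al(x)\,\De_M\psi(x)\,\dd x$. Expanding $u_\al$ by its defining integral and exchanging the order of integration, the key geometric observations are that the Riesz kernel $\rho(x,x')^{\al-n}$ is symmetric under $x\leftrightarrow x'$ and that $x'\in J^+_\dom(x)$ if and only if $x\in J^-_\dom(x')$. These turn the inner $x$-integral into the advanced (past-directed) Riesz potential $\tilde R^\al_{\dom,x'}(\De_M\psi)$ based at $x'$. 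Since $M$ has constant curvature, time-orientation reversal is an isometry, so the advanced potentials satisfy the exact analogue of Theorem \ref{T:Riesz}; in particular $\tilde R^\al_{\dom,x'}(\De_M\psi)=\tilde R^{\al-2}_{\dom,x'}(\psi)$. Reassembling, this says precisely that $\int_\dom u_\al\,\De_M\psi\,\dd x=\int_\dom u_{\al-2}\,\psi\,\dd x$, i.e.\ $\De_M u_\al=u_{\al-2}$ in $\cD'(\dom)$.

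The hard part will be the distributional bookkeeping, since for $\al=2$ the kernel $\rho^{2-n}$ is singular on the null cone and the domain of integration $J^+_\dom(x)$ itself depends on $x$. I would handle this exactly as in the theory of Riesz potentials: first establish the identity for $\Re\al$ large, where $\rho^{\al-n}$ is locally integrable and vanishes to high order along the cone $\{\rho=0\}$, so that Fubini's theorem applies and the motion of the boundary $\pd J^+_\dom(x)$ contributes nothing; and then extend $\De_M u_\al=u_{\al-2}$ to all $\al\in\CC$ by the analytic continuation guaranteed by the holomorphy statement in Theorem \ref{T:Riesz}. Setting $\al=2$ and invoking $R^0_{\dom,x}=\de_x$ then yields $\De_M u=\phi$, understood in the sense of distributions in $\dom$; hyperbolic regularity, together with the smoothness of $\phi$, gives the solution the expected regularity. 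An alternative, more computational route would bypass the advanced potentials by differentiating $u_\al$ directly under the integral sign and verifying the pointwise kernel recursion $C_\al\De_{M,x}\rho^{\al-n}=C_{\al-2}\rho^{\al-2-n}$ off the cone, using the radial form of $\De_M$ on the constant-curvature space $M$; the symmetry of $\rho$ makes this base-point computation identical to the field-point one already underlying Theorem \ref{T:Riesz}.
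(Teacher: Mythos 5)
Your argument is correct and is precisely the deduction the paper intends: the corollary is stated without proof as an immediate consequence of Theorem~\ref{T:Riesz}, and your duality argument (transferring the recursion $\De_M R^\al_{\dom,x}=R^{\al-2}_{\dom,x}$ from the field variable to the base point via the symmetry $\rho(x,x')=\rho(x',x)$ and $x'\in J^+_\dom(x)\Leftrightarrow x\in J^-_\dom(x')$, first for $\Re\al>n$ and then by analytic continuation) is the standard way to make that step rigorous. The only cosmetic caveat is that smoothness of $u$ does not follow from hyperbolic regularity alone ($\De_M$ is not hypoelliptic here) but rather from the smooth dependence of the Riesz distributions on the base point; this does not affect the validity of $\De_M u=\phi$ in $\cD'(\dom)$.
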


We define the bundle-valued distributions $\tau_p,\htau_p$ over
$\dom$ using the Lorentzian distance $\rho$ as in
Section~\ref{S:sphericalRiem}. If $x\in\dom$ and $x'\in J^+_\dom(x)$
does not lie on the causal cut locus of $x$,
$\tau_p(x,x')+\htau_p(x,x')$ is again the parallel transport
operator $\La^p(T^*_x\dom)\to \La^p(T^*_{x'}\dom)$ along the unique
future-directed minimal geodesic from $x$ to $x'$. In particular,
$\tau_p(x,x)+\htau_p(x,x)$ is the identity map on $\La^p(T^*_xM)$.

\begin{definition}\label{D:LSM}
Let $\om\in\Om^p_0(\dom)$. Its (Lorentzian) {\em spherical means} of radius $r$ are defined as
\begin{align*}
\cM_r^\dom\om(x)&:=\frac{(-1)^p}{m(r)}\int_{S^+_\dom(x,r)}\tau(x,x')\cdot'\om(x')\,\dd
S(x')\,,\\
\hcM_r^\dom\om(x)&:=\frac{(-1)^p}{m(r)}\int_{S^+_\dom(x,r)}\htau(x,x')\cdot'\om(x')\,\dd
S(x')\,,
\end{align*}
where $x\in\dom$ and $m$ is given by Eq.~\eqref{area}.
\end{definition}

We shall henceforth drop the scripts $\dom$ when there is no risk of
confusion. It should be noticed that the latter operators do not
exactly describe a spherical mean in the strict sense since in the
Lorentzian case $m(r)$ does not yield the area of $S^+(x,r)$, which
can be infinite. However, the fundamental relation
\begin{multline}\label{keyLor}
\De_M\int_0^\infty\Big(f(r)\,\cM_r\om+\hf(r)\,\hcM_r\om\Big)\,\dd\mu_M(r)=
\int_0^\infty\Big(F(f,\hf)(r)\,\cM_r\om+\hF(f,\hf)(r)\,\hcM_r\om\Big)\,\dd\mu_M(r)\,,
\end{multline}
also holds true in the Lorentzian case, with $F(f,\hf)$ and
$\hF(f,\hf)$ defined by Eq.~\eqref{Fs}.

\section{Riemannian products}
\label{S:Riem}

In this section we shall solve the Poisson equation
\begin{equation}\label{Poisson}
\De_{M\times\Si}\psi=\om\,,
\end{equation}
$\om$ being a compactly supported differential form in the Riemannian product space $M\times\Si$. If $M\times\Si$ is compact, we assume that $\om$ is orthogonal to the harmonic forms of the same degree in $M\times\Si$, which is the necessary and sufficient solvability condition of the latter equation~\cite{Wa78}.

It obviously suffices to analyze Eq.~\eqref{Poisson} when $\om\in\Om^p(M)\otimes\Om^q(\Si)$ for some integers $p$ and $q$. If we denote by $\Om_M$ and $\Om_\Si$ the Riemannian volume forms in $M$ and $\Si$, respectively, the above orthogonality conditions amount to imposing
\begin{subequations}\label{solv}
\begin{align}
&\int \om \,\Om_M\wedge\Om_\Si=0\quad\text{if }(p,q)=(0,0)\,,& \int \om \wedge\Om_M=0\quad\text{if }(p,q)=(0,2)\,,\\
&\int \om \wedge\Om_\Si=0\quad\text{if }(p,q)=(n,0)\,, &\int \om =0\quad\text{if }(p,q)=(n,2)\,\phantom{,}
\end{align}
\end{subequations}
when $M\times\Si$ is compact.

We begin by discussing these exceptional cases.  The explicit solution of the scalar equation ($p=q=0$) for functions $\om$ in $M\times\Si$ (and more general symmetric spaces) is well known and can be found in~\cite{He84}. Concerning the other exceptional cases, let us denote by $*_M$ and $*_\Si$ the Hodge star operator of $M$ and $\Si$ and observe that $*_\Si\om$, $*_M\om$ or $*_\Si(*_M\om)$ are $C^\infty$ scalar functions on $M\times\Si$, respectively, when $(p,q)=(0,2)$, $(n,0)$ or $(n,2)$. As $*_M$ and $*_\Si$ commute with the Hodge Laplacian of $M\times\Si$ and preserve the orthogonality relations~\eqref{solv}, for $(p,q)\in\{(0,2),(n,0),(n,2)\}$ Eq.~\eqref{Poisson} turns out to be equivalent to the scalar case by Hodge duality. Therefore we can henceforth assume that $(p,q)\not\in\{(0,0),(0,2),(n,0),(n,2)\}$. For concreteness we shall also assume that $k$ and $\vka$ are nonzero; the case where one of the curvatures is zero can be treated along the same lines (and is considerably easier).

We find it convenient to introduce the notation $\cS_s$ and $\hcS_s$ for the spherical means of
radius $s$ in the Riemannian surface $\Si$ and call
\[
\dd\mu_\Si(s):=2\pi\frac{\sin \ska\, s}{\ska}\,\dd s
\]
the radial measure on $(0,\diam(\Si))$. It should be observed that the decomposition
\[
\Om^{*}(M\times\Si)=\bigoplus_{p,q\geq0}\Om^p(M)\otimes\Om^q(\Si)
\]
defines a natural action of the Laplacians and spherical means
operators of $M$ and $\Si$ on $\Om^{*}(M\times\Si)$.

A simple but important observation is the following

\begin{lemma}\label{L:dim2}
Let $\be\in\Om^q_0(\Si)$. Then
\[
\De_\Si\int_0^{\diam(\Si)}w(s)\,(\cS_s+\hcS_s)\be\,\dd\mu_\Si(s)=\int_0^{\diam(\Si)}L_qw(s)\,(\cS_s+\hcS_s)\be\,\dd\mu_\Si(s)\,,
\]
where $w\in H^2\loc((0,\diam(\Si)),\dd\mu_\Si)$ and
\begin{subequations}\label{Lq}
\begin{align}
L_0=L_2&:=-\frac{\pd^2}{\pd s^2}-\ska\cot\ska s\,\frac\pd{\pd s}\,,\label{L0}\\
L_1w(s)&:=L_0w(s)+\frac{2\ska\,w(s)}{1+\cos\ska s}\,.\label{L1}
\end{align}
\end{subequations}
\end{lemma}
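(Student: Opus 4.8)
The plan is to read the lemma as the diagonal specialisation $f=\hf=w$ of the fundamental relation \eqref{keyRiem}, applied not to $M$ but to the surface $\Si$. Since $\Si$ is itself a simply connected $2$-dimensional Riemannian space form of curvature $\vka$, the entire content of Section \ref{S:sphericalRiem} holds for it after the substitutions $M\to\Si$, $n\to2$, $k\to\vka$, $r\to s$, $p\to q$; in particular $\cS_s,\hcS_s$ are the spherical means $\cM_s,\hcM_s$ of $\Si$, the area function $m$ reproduces $m(s)\,\dd s=\dd\mu_\Si(s)$, and \eqref{LM} gives $-L_\Si=\pd_s^2+\ska\cot\ska s\,\pd_s$, i.e.\ $L_\Si=L_0$. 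Setting $f=\hf=w$ in \eqref{keyRiem} then turns its left-hand side into exactly $\De_\Si\int_0^{\diam(\Si)}w\,(\cS_s+\hcS_s)\be\,\dd\mu_\Si$, so the lemma reduces to evaluating $F(w,w)$ and $\hF(w,w)$ from \eqref{Fs} and identifying them with $L_qw$.

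The only algebraic input needed is the identity $2\csc^2\ska s\,(1-\cos\ska s)=2/(1+\cos\ska s)$, immediate from $\sin^2\te=(1-\cos\te)(1+\cos\te)$, which collapses the bracketed coupling term of \eqref{Fs} into the zeroth-order potential of $L_1$. The degrees $q=0$ and $q=2$ are then disposed of using Example \ref{ex1}: for $q=0$ one has $\hcS_s\be=0$, and since the prefactor $q\vka$ of the bracket in $F$ vanishes, $F(w,w)=L_\Si w=L_0w$; dually, for $q=2$ one has $\cS_s\be=0$, and the prefactor $(2-q)\vka$ of the bracket in $\hF$ vanishes, giving $\hF(w,w)=L_\Si w=L_2w$.

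The genuinely two-component case is $q=1$, where neither spherical mean vanishes. Here $p=n-p=1$, so the bracketed terms of \eqref{Fs}, viewed as linear forms in $(f,\hf)$, coincide on the diagonal $f=\hf$, whence $F(w,w)=\hF(w,w)$; applying the trigonometric identity to this common value produces precisely the operator $L_1$ of \eqref{L1}. This coincidence $F(w,w)=\hF(w,w)$ is the real point of the lemma: it is exactly what legitimises the diagonal ansatz, ensuring that a single scalar multiplier $w$ can drive both $\cS_s$ and $\hcS_s$ simultaneously and collapse the otherwise coupled second-order system of \eqref{Fs} into the single operator $L_1$.

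Finally, to cover $w\in H^2\loc$ rather than merely smooth $w$, I would appeal to the distributional version of \eqref{keyRiem} recorded at the close of Section \ref{S:sphericalRiem}. As $\be$ is compactly supported, both sides depend continuously on $w$, and on any compact subinterval of $(0,\diam(\Si))$ the coefficients of $L_q$ are bounded, so $w\in H^2\loc$ forces $L_qw\in L^1\loc$ and the right-hand integral to converge; approximating $w$ by smooth functions and passing to the distributional limit then yields the identity in full generality. I expect the only delicate point to be the far endpoint $s\to\diam(\Si)$, where the $L_1$-potential blows up --- the source of the regular singular point that later feeds into the Heun-type radial equation --- but the support restrictions inherited from $\be$ together with the local nature of the claim control this exactly as in the scalar theory of \cite{Gu88}.
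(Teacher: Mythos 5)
Your proof is correct and follows essentially the same route as the paper's: specialise the fundamental relation \eqref{keyRiem} to the two-dimensional space form $\Si$ with $f=\hf=w$, use $\hcS_s\be=0$ (resp.\ $\cS_s\be=0$) to reduce $q=0$ (resp.\ $q=2$) to $F(w,w)=L_0w$ (resp.\ $\hF(w,w)=L_2w$), and observe that $F(w,w)=\hF(w,w)=L_1w$ when $q=1$. As a side remark, your trigonometric simplification yields the potential term $2\vka\,w/(1+\cos\ska s)$, which is the dimensionally correct form consistent with $T_1=T_0+\frac{1}{1-t}$ in Section~\ref{S:dim2}, so the factor $\ska$ in \eqref{L1} appears to be a typo rather than a flaw in your argument.
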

\begin{proof}
We saw in Example~\ref{ex1} that $\hcS_s\om=0$ when $q=0$. Hence in this case we easily obtain the desired formula from Eq.~\eqref{keyRiem} by noticing that $L_0w=F(w,w)$. When $q=1$, this follows directly from Eq.~\eqref{keyRiem} since $L_1w=F(w,w)=\hF(w,w)$. The case $q=2$ is essentially equivalent to $q=0$ by Hodge duality and uses that $L_2w=\hF(w,w)$.
\end{proof}

The formal differential operators $L_q$ given by~\eqref{Lq} define self-adjoint operators (which we still denote by $L_q$ for simplicity of notation) with domains given by the functions $u\in H^1((0,\diam(\Si)),\dd\mu_\Si)$  such that $L_qu\in L^2((0,\diam(\Si)),\dd\mu_\Si)$, $\lim\limits_{s\downarrow0}s\,u'(s)=0$, and
\begin{equation*}
\lim_{s\uparrow\diam(\Si)}\big(\diam(\Si)-s\big)\, u'(s)=0\;\; \text{if }\,\vka>0\,\text{ and }\,q=0\,.
\end{equation*}
Next we state a useful proposition on the spectral decomposition of the operators $L_q$. The proof, which consists of an application of the Weyl--Kodaira theorem and some manipulations of hypergeometric functions, is postponed until in Section~\ref{S:dim2}.

\begin{proposition}\label{P:spectral}
Let $c_{\vka,q}:=-\frac14\vka$ if $\vka<0$ and $c_{\vka,q}:=2\vka\de_{q1}$ if $\vka>0$. Then for each $q=0,1,2$ there exist a Borel measure $\rho_q$ on $[c_{\vka,q},\infty)$ and a $(\mu_\Si\times\rho_q)$-measurable function $w_q:(0,\diam(\Si))\times[c_{\vka,q},\infty)\to0$ such that:
\begin{enumerate}
\item $w_q(\cdot,\la)$ is an analytic formal eigenfunction of $L_q$ with eigenvalue $\la$  for $\rho_q$-almost every $\la\in [c_{\vka,q},\infty)$

\item The map
\[
u\mapsto \int_0^{\diam(\Si)} u(s)\,\overline{w_q(s,\cdot)}\,\dd\mu_\Si(s)
\]
defines a unitary transformation $U_q:L^2((0,\diam(\Si)),\dd\mu_\Si)\to L^2([c_{\vka,q},\infty),\dd\rho_q)$ with inverse given by
\[
U_q^{-1}u:=\int_{c_{\vka,q}}^{\infty} w_q(\cdot,\la)\,u(\la)\,\dd\rho_q(\la)\,.
\]

\item If $g(L_q)$ is any bounded function of $L_q$, then
\begin{equation}\label{L2conv}
g(L_q)u=\int_{c_{\vka,q}}^{\infty} g(\la)\,w_q(\cdot,\la)\,U_qu(\la)\,\dd\rho_q(\la)
\end{equation}
in the sense of norm convergence.
\end{enumerate}
\end{proposition}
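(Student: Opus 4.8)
The plan is to apply the Weyl--Kodaira--Titchmarsh theory of singular Sturm--Liouville operators to each of the three operators $L_q$, viewed on the weighted space $L^2((0,\diam(\Si)),\dd\mu_\Si)$. First I would rewrite each $L_q$ in formally self-adjoint Sturm--Liouville form with respect to the weight $m_\Si(s):=2\pi\sk^{-1}_{\vka}\sin\ska\,s$ (where I abuse notation by writing $\sk_\vka$ for $\ska$), so that $L_q u=-\frac{1}{m_\Si}\,(m_\Si\,u')'+V_q\,u$ for an appropriate potential $V_q$; for $q=0,2$ the potential vanishes, while for $q=1$ one reads $V_1(s)=2\ska/(1+\cos\ska\,s)$ off~\eqref{L1}. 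The next step is the endpoint classification. At $s\downarrow0$ the weight degenerates like $s$ and the potential is at worst a $1/s^2$-type singularity, so this is the standard limit-point case for the radial part of a Laplacian in dimension two; the boundary behavior $\lim_{s\downarrow0}s\,u'(s)=0$ encodes the correct selection of the regular solution. At the other endpoint the analysis splits: for $\vka<0$ the interval is $(0,\infty)$ and one checks the limit-point case at $+\infty$, whereas for $\vka>0$ one has a second regular-singular endpoint at $s=\diam(\Si)=\pi/\ska$, requiring the analogous boundary condition (imposed explicitly only for $q=0$ in the statement of the domain).

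With the operator realized as self-adjoint, the core of the argument is to produce the generalized eigenfunctions $w_q(\cdot,\la)$ and the spectral measure $\rho_q$ explicitly. Here I would solve $L_q w=\la w$ in closed form. After the substitution $z=\cos^2(\tfrac12\ska\,s)$ (or $z=\frac12(1+\cos\ska\,s)$), each eigenvalue equation transforms into a hypergeometric equation; for $q=0,2$ this is the classical associated-Legendre/hypergeometric reduction familiar from the scalar Laplacian on the two-sphere or the hyperbolic plane, and for $q=1$ the extra potential $V_1$ shifts the local exponents at the singular points but keeps the equation hypergeometric. Selecting the solution regular at $s=0$ pins down $w_q(s,\la)$ up to normalization as a specific ${}_2F_1$, and its analyticity in $\la$ is immediate from analyticity of ${}_2F_1$ in its parameters. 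The Weyl--Kodaira theorem then furnishes a spectral measure $\rho_q$ supported on the spectrum, and delivers the eigenfunction expansion \eqref{L2conv}, the unitarity of $U_q$ in part~(ii), and the functional calculus in part~(iii) as its standard outputs.

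The remaining point is to identify the support of $\rho_q$ as the stated half-line $[c_{\vka,q},\infty)$. For $\vka<0$ the operator is (up to the $V_1$ term) the radial Laplacian on the hyperbolic plane, whose spectrum begins at the bottom $\tfrac14|\vka|=-\tfrac14\vka$, matching $c_{\vka,q}=-\tfrac14\vka$; I would confirm this by computing the Titchmarsh--Weyl $m$-function at $+\infty$ and locating its branch point, checking that the potential $V_1$ (which decays appropriately) does not move the continuous-spectrum threshold. For $\vka>0$ the interval is compact with two regular-singular endpoints, the spectrum is purely discrete, and $\rho_q$ is atomic; the threshold $c_{\vka,q}=2\vka\,\de_{q1}$ records that the lowest eigenvalue is $0$ for $q=0,2$ but is lifted to $2\vka$ for $q=1$ precisely because of the potential $V_1$, which one verifies by exhibiting the lowest hypergeometric eigenfunction and reading off its eigenvalue.

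I expect the main obstacle to be the $q=1$ case at the singular endpoints. The potential $V_1(s)=2\ska/(1+\cos\ska\,s)$ is regular at $s=0$ but blows up at $s=\diam(\Si)=\pi/\ska$ in the curved-surface case, so the endpoint classification and the exponent computation there are genuinely more delicate than for $q=0,2$, and determining the precise threshold $2\vka$ (rather than $0$) demands careful tracking of the hypergeometric parameters through the change of variables. The scalar cases $q=0,2$ are essentially textbook, so the effort concentrates on verifying that the $q=1$ operator is still limit-point (resp.\ has the asserted discrete spectrum), that its regular solution remains hypergeometric, and that the shift in the bottom of the spectrum comes out correctly.
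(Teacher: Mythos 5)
Your proposal follows essentially the same route as the paper: pass to a hypergeometric normal form, apply the Weyl--Kodaira theorem, treat the compact case ($\vka>0$, discrete spectrum, Legendre/Jacobi polynomial eigenfunctions with lowest eigenvalue $0$ for $q=0,2$ and $2\vka$ for $q=1$) and the noncompact case ($\vka<0$, absolutely continuous spectrum on $[-\tfrac14\vka,\infty)$ with the density read off from the Titchmarsh--Weyl connection coefficients) separately. One technical point is stated backwards, though: at $s=0$ the operators are in the \emph{limit circle} case, not the limit point case --- for the two-dimensional radial Laplacian both solutions ($\sim 1$ and $\sim\log s$) are square integrable near the origin with respect to $\dd\mu_\Si$, which is exactly why the boundary condition $\lim_{s\downarrow0}s\,u'(s)=0$ must be \emph{imposed} to single out a self-adjoint realization rather than being automatic. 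Your subsequent steps are unaffected since you impose that condition anyway, but as written the sentence is internally inconsistent (in the limit point case no selection of the regular solution would be needed). The endpoint where the limit point case genuinely occurs, and where for $q=1$ the blow-up of $V_1$ changes the classification relative to $q=0,2$, is $s=\diam(\Si)$ (for $\vka>0$) and $s=\infty$ (for $\vka<0$), which you correctly identify as the delicate part.
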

\begin{remark}
Explicit formulas for all the functions involved are given in Lemmas~\ref{L:spec+} and~\ref{L:spec-}. By definition, $\rho_q$ is supported on the spectrum of the self-adjoint operator $L_q$.
\end{remark}

We shall construct a solution to Poisson's equation~\eqref{Poisson} as
\begin{align}\label{ansatzRiem}
\psi=\int w_q(s,\la)\,(\cS_s+\hcS_s)\big(f(r,\la)\cM_r+\hf(r,\la)\hcM_r\big)\om\,\dd\mu_\Si(s)\,\dd\mu_M(r)\,\dd\rho_q(\la)\,,
\end{align}
where the integral ranges over $[c_{\vka,q},\infty)\times(0,\diam(M))\times(0,\diam(\Si))$.
By construction, $L_q w_q(s,\la)=\la\, w_q(s,\la)$ for $\rho_q$-almost every
$\la$, so from Lemma~\ref{L:dim2} and the fact that
$\De_{M\times\Si}\psi=\De_M\psi+\De_\Si\psi$ it follows that
\begin{align}\label{LaplacianMSi}
\De_{M\times\Si}\psi=\int w_q\,(\cS+\hcS)\big[\big(F(f,\hf)+\la f\big)\cM+\big(\hF(f,\hf)+\la\hf\big)\hcM\big]\om\,\dd\mu_\Si\,\dd\mu_M\,\dd\rho_q\,,
\end{align}
with $F$ and $\hF$ given by~\eqref{Fs}.

Let us consider the system of coupled ODEs
\begin{equation}\label{ODEs}
F(f,\hf)+\la f=0\,,\qquad \hF(f,\hf)+\la\hf=0\,,
\end{equation}
depending on the nonnegative parameter $\la$. Let us introduce the variable $z:=\sin^2\frac{\sk r}2$, which takes values in $(0,1)$ when $r\in(0,\diam(M))$ and $M$ is compact and in $(-\infty,0)$  when $M$ is noncompact. With an abuse of notation we shall temporarily write $f(z)$ or $\hf(z)$ for the expression of the functions $f$ or $\hf$ in this variable, that is, for $f(\frac2{\sk}\arcsin\sqrt{\cdot})$. This allows to write Eqs.~\eqref{ODEs} as
\begin{subequations}\label{ODEsz}
\begin{align}
z(1-z)f''(z)+\frac n2(1-2z)f'(z)&=\bigg[\ell+p\bigg(\frac1{2z(1-z)}+n-p-1\bigg)\bigg]f(z)-\frac{p(1-2z)}{2z(1-z)}\hf(z)\,,\label{ODEsza}\\
z(1-z)\hf''(z)+\frac n2(1-2z)\hf'(z)&=\bigg[\ell+(n-p)\bigg(\frac1{2z(1-z)}+p-1\bigg)\bigg]\hf(z)-\frac{(n-p)(1-2z)}{2z(1-z)}f(z)\label{ODEszb}\,,
\end{align}
\end{subequations}
with $\ell:=\la/k$. We can solve~\eqref{ODEsza} for $\hf$ and substitute in the second equation to arrive at a linear fourth order differential equation for $f$, namely
\begin{equation}\label{fourth}
\frac{\dd^4f}{\dd z^4}+\sum_{j=0}^3Q_j(z)\,\frac{\dd^jf}{\dd z^j}=0
\end{equation}
with the rational functions $Q_j$ being given by
\begin{align*}
Q_3(z)&=\frac{n+4}{z-1}+\frac{n+4}{z}+\frac{4}{1-2 z}\,,\\
Q_2(z)&=\Big[16\big(n^2+2 (p+3) n-2 p^2+2 (\ell +3)\big) z^4+32\big(2 p^2-n^2-2 (p+3) n\\
&-2 (\ell +3)\big) z^3
+8\big(3 n^2+5 (p+3) n+3 n-5 p^2+
   \ell +4 (\ell +3)+6\big) z^2+8\big(p^2-n^2\,,\\
&- (p+3) n-3 n- \ell -6\big) z+n^2+6 n+8\Big]\Big/\Big(16 z^6-48 z^5+52 z^4-24 z^3+4 z^2\Big)\\
Q_1(z)&=\Big[2 p n^2+n^2-2 p^2 n+4 p n+2 \ell  n+2 n-4 p^2+4 \ell +4 \big(-2 p n^2- n^2+2 p^2 n\\
&-2 p n-2 \ell  n-2 n+2 p^2+2 p-2 \ell \big)z+4 \big(2 p
   n^2+ n^2-2 p^2 n+2 p n+2 \ell  n+2 n-2 p^2\\
&-2 p+2 \ell \big)z^2\Big]\Big/\Big(4 z^5-10 z^4+8 z^3-2 z^2\Big)\,,\\
Q_0(z)&=\Big[p^4-2 n p^3+n^2 p^2-2 \ell  p^2+p^2-2 n p+2 n \ell  p-2 p+\ell ^2-2 \ell + 4\big(- p^4+2 n p^3- n^2 p^2\\
&+2 \ell  p^2+ p^2-2 n \ell  p-
   \ell ^2\big)z+ 4\big( p^4-2 n p^3+ n^2 p^2-2 \ell  p^2- p^2+2 n \ell  p\\
&+ \ell ^2\big)z^2 \Big]\Big/\Big(4 z^6-12 z^5+13 z^4-6 z^3+z^2\Big)\,.
\end{align*}

The fourth order equation~\eqref{fourth} plays a crucial role in the computation of the Green's function of the Hodge Laplacian. A simple computation shows that it has four regular singular points at $0$, $\frac12$, $1$ and $\infty$, so it can be understood as a fourth order analogue of the Heun equation.
For our purpose it is important to make the following observation.

\begin{proposition}\label{P:fourth1}
For all $\ell\leq0$ there exists a unique real solution $\Phi_+(z,\ell)$ of Eq.~\eqref{fourth} with the asymptotic behavior at $0$
\begin{equation}\label{at0}
\Phi_+(z,\ell)=\frac{4z^{1-\frac n2}}{k(n-2)|\SS^{n-1}|}+O(z^{2-\frac n2})\quad \text{if }\,n\geq3\quad\text{and}\quad \Phi_+(z,\ell)=-\frac{\log |z|}{4\pi}+O(1)\quad \text{if }\,n=2\,,
\end{equation}
and the fastest possible decay as $z$ tends to $-\infty$. If $\ell>0$ or $p\neq0,n$ and $\ell\geq0$, there is also a unique real solution $\Phi_-(z,\ell)$ of the latter equation such that  $z^{\frac n2-1}\Phi_-(z,\ell)$ is analytic in the closed interval $[0,1]$ and has the asymptotic behavior~\eqref{at0} at $0$.
\end{proposition}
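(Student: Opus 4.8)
The plan is to read~\eqref{fourth} as a Fuchsian equation and to control its solutions by combining a Frobenius analysis at the relevant singular points with the self-adjointness of the radial part of $\De_M$. I would begin at $z=0$. Only $Q_3$ and $Q_2$ carry the maximal pole order there, with $\lim_{z\to0}zQ_3=n+4$ and $\lim_{z\to0}z^2Q_2=\tfrac14(n+2)(n+4)$, while $Q_1,Q_0$ have at most double poles and so do not enter the top-order balance; the indicial polynomial therefore factors as
\[
r(r-1)\bigl[r^2+(n-1)r+\tfrac14 n(n-2)\bigr],
\]
whose four roots are $-\tfrac n2,\ 1-\tfrac n2,\ 0,\ 1$. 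The presence of the exponent $1-\tfrac n2$ furnishes a Frobenius solution with exactly the leading power demanded in~\eqref{at0}, and its collision with the exponent $0$ at $n=2$ accounts for the logarithm. I would fix the normalization constant not by a fresh computation but by matching the diagonal singularity of the ansatz~\eqref{ansatzRiem} to the fundamental solution of $\De_M$, as in the scalar case $p=0$ recalled from~\cite{He84}, so that $\De_{M\times\Si}\psi$ carries the correct Dirac mass.

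The conceptual engine for both existence and uniqueness is that the coupled system~\eqref{ODEs} behind~\eqref{fourth} is precisely $(\mathcal L_M+\la)(f,\hf)=0$, where $\mathcal L_M$ is the vector-valued radial part of the \emph{nonnegative} self-adjoint operator $\De_M$ on $p$-forms supplied by~\eqref{keyRiem}, and $\la=k\ell\geq0$ is the separation eigenvalue inherited from $\Si$. Thus $\Phi_\pm$ is the radial Green's function of $\De_M+\la$ at the spectral point $-\la\leq0$. Any nonzero solution that were simultaneously regular at the origin and recessive at the far end would be a square-integrable eigenfunction of the nonnegative $\mathcal L_M$ with eigenvalue $-\la\le0$, forcing $-\la=0$. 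For $\Phi_+$ ($k<0$, $z\in(-\infty,0)$) the end $z\to-\infty$ is in Weyl's limit-point case, the solutions of fastest decay are exactly the recessive ones there, and one checks that the radial operator carries no such null solution, so the problem is non-degenerate for every $\ell\le0$. For $\Phi_-$ ($k>0$, $z\in(0,1)$) the threshold $-\la=0$ supports a regular square-integrable solution precisely when the round $M$ has harmonic $p$-forms, i.e.\ for $p\in\{0,n\}$ --- exactly the degrees ruled out when $\ell=0$. This is how the hypotheses on $\ell$ are encoded.

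With this input the statement becomes a transversality count in the four-dimensional solution space $V$. Let $V_0\subset V$ be the two-dimensional space of solutions analytic at $0$ (exponents $0,1$); for $\Phi_+$ let $V_\infty$ be the two-dimensional recessive space at $-\infty$, and for $\Phi_-$ let $V_1$ be the two-dimensional space analytic at $z=1$, whose exponents coincide with those at $0$ via the symmetry $z\mapsto1-z$ of~\eqref{fourth} (which I would verify from the parities $Q_3(1-z)=-Q_3(z)$, $Q_2(1-z)=Q_2(z)$, $Q_1(1-z)=-Q_1(z)$, $Q_0(1-z)=Q_0(z)$). The non-degeneracy above is exactly $V_0\cap V_\infty=\{0\}$ (resp.\ $V_0\cap V_1=\{0\}$). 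Given it, one takes a particular solution carrying the prescribed singular data at $0$ and corrects it by the unique element of $V_0$ that places it in the recessive (resp.\ regular-at-$1$) subspace, yielding a unique $\Phi_\pm$; reality is automatic since $Q_j$ and all four exponents are real for $\ell\le0$. For $\Phi_-$ there is one extra local check: the point $z=\tfrac12$ lies in the interior of $(0,1)$, so I must show from its indicial equation that it is an \emph{apparent} singularity --- non-negative integer exponents and no logarithmic term --- so that the Frobenius solution grown at $0$ continues analytically across it up to $z=1$, which is what makes $z^{n/2-1}\Phi_-$ analytic on all of $[0,1]$.

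The hard part will be the non-degeneracy $V_0\cap V_\infty=\{0\}$ (resp.\ $V_0\cap V_1=\{0\}$): turning ``$-\la$ lies in the resolvent set'' into the clean ODE assertion that no nonzero solution is at once regular at the origin and recessive at the far end. This demands (i) passing rigorously between the scalar equation~\eqref{fourth} and the self-adjoint second-order system~\eqref{ODEs}; (ii) establishing the limit-point classification at $z=-\infty$, so that ``fastest possible decay'' is genuinely a two-dimensional square-integrability condition; and (iii) excluding the threshold $-\la=0$ from the radial point spectrum in the noncompact case. The apparent-singularity verification at $z=\tfrac12$ is the one genuinely computational nuisance, but it is a finite Frobenius check rather than a conceptual obstacle.
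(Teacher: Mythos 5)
Your skeleton coincides with the paper's: both arguments read \eqref{fourth} as a Fuchsian equation, identify the exponents $-\tfrac n2,\,1-\tfrac n2,\,0,\,1$ at $z=0$ (your explicit indicial polynomial, using that only $Q_3,Q_2$ reach the critical pole order, agrees with the exponents the paper states without computation), write the candidate as the prescribed $\phi_{1-n/2}$-branch plus the two-parameter family $c_1\phi_0+c_2\phi_1$, and spend the two constants on the condition at the far end. Where you genuinely diverge is in how those constants are shown to be determined: the paper merely asserts that the asymptotics at $\infty$ (resp.\ the exponents at $z=1$, obtained from the $z\mapsto1-z$ symmetry you propose to verify) permit the choice, ``with some effort,'' adding only a remark that for $\Phi_-$ this is tied to the solvability of $(k\ell+\De_M)\Psi=\al$. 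You promote that remark to the engine of the proof, recasting determinacy as the transversality $V_0\cap V_\infty=\{0\}$ (resp.\ $V_0\cap V_1=\{0\}$) in the four-dimensional solution space and deriving it from nonnegativity of the self-adjoint radial operator behind \eqref{ODEs}. For $\Phi_-$ this is cleaner and reproduces exactly the paper's exceptional set $p\in\{0,n\}$, $\ell=0$; your insistence on checking that $z=\tfrac12$ is an \emph{apparent} singularity (no logarithms, not merely exponents $0,1,3,4$) is also a point the paper elides.

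The one place where your argument as written does not close is the claim that for $\Phi_+$ ``one checks that the radial operator carries no such null solution, so the problem is non-degenerate for every $\ell\le0$.'' At the threshold $\ell=0$ with $2p=n$ and $k<0$ this is false if ``null solution'' means square-integrable and regular at the origin: the paper's own remark after Theorem \ref{T:mainR} recalls that hyperbolic space carries infinitely many $L^2$ harmonic forms in middle degree, and among them there are rotationally equivariant ones smooth at the base point (for $n=2$, $p=1$, the form $dz$ on the disk is an explicit example), i.e.\ genuine nonzero elements of $V_0\cap\{L^2\text{ at }\infty\}$. Hence the implication ``no $L^2$ eigenfunction at $-\la\le 0$ $\Rightarrow V_0\cap V_\infty=\{0\}$'' cannot be run there: you must instead show directly that these harmonic solutions, while square-integrable, sit at a strictly slower exponent than the two-dimensional recessive space $V_\infty$. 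For $2p=n$, $\ell=0$ the exponents at infinity are $\tfrac n2+1,\tfrac n2,\tfrac n2,\tfrac n2-1$, so the $L^2$ subspace is three-dimensional while $V_\infty$ is two-dimensional, and you must also disentangle the repeated exponent $\tfrac n2$ (e.g.\ by exhibiting the logarithmic second solution) before ``fastest possible decay'' picks out a unique element. This is a finite asymptotic computation, but it lies outside the spectral-positivity argument, and it occurs precisely in the case where $L^2$ uniqueness fails while the Proposition still asserts uniqueness of the fastest-decaying solution.
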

\begin{proof}
Eq.~\eqref{fourth} is a fourth order Fuchsian equation with poles at $0$, $\frac12$, $1$ and $\infty$. Suppose that $n\geq3$. Then characteristic exponents at $0$ and $1$ are $-\frac n2,1-\frac n2,0$ and $1$, so that $\Phi_+$ must be given by
\begin{equation*}
\Phi_+(z,\ell)=\frac{4}{k(n-2)|\SS^{n-1}|}\phi_{1-\frac n2}(z)+c_1\phi_{0}(z)+c_2\phi_{1}(z)\,,
\end{equation*}
where $\phi_{\nu}$ stands for the local solution of~\eqref{fourth} which is analytic on $(-\infty,0)$ and asymptotic to $z^\nu$ at $0$. The characteristic exponents of at $\infty$ are $\frac12[n+1\pm((n+1-2p)^2-4\ell)^{1/2}] $ and $\frac12[n-1\pm((n-1-2p)^2-4\ell)^{1/2}]$, so that each local solution $\phi_\nu$ has well-defined asymptotics at $\infty$ and the real constants $c_1$ and $c_2$ can be chosen so as to obtain the fastest possible decay. When $n=2$, the local solutions at zero behave as $z^{-1}$, $\log|z|$, $1$ and $z$ and the same reasoning applies mutatis mutandis.

The analysis on the interval $(0,1)$ is similar. The characteristic exponents at $\frac12$ are $0,1,3$ and $4$, so all the solutions of the equation are analytic at this point. The function $\Phi_-$ must therefore be given by
\begin{equation*}
\Phi_-(z,\ell)=\frac{4}{k(n-2)|\SS^{n-1}|}\phi_{1-\frac n2}(z)+c_1\phi_{0}(z)+c_2\phi_{1}(z)\,,
\end{equation*}
where the local solutions $\phi_\nu$ are now analytic in $(0,1)$ and asymptotic to $z^\nu$ at $0$. The constants $c_1$ and $c_2$ should now be chosen so as to ensure that $\Phi_-$ does not have any terms asymptotic to $z^{-\frac n2}$ or $z^{1-\frac n2}$. With some effort it can be shown that this is always possible when $p\neq0,n$ or $\ell\neq0$, which is not surprising since these conditions are directly related to the solvability of the associated equation $(k\ell+\De_M)\Psi=\al$, with $M$ compact, $\al\in\Om^p(M)$ and $k\ell\geq0$.
\end{proof}

Now we have all the ingredients to prove the main result of this section.

\begin{theorem}\label{T:mainR}
Let us set $f(r,\la):=\overline{w_q(0,\la)}\,\Phi_{\sign k}(\sin^2\frac{\sk r}2,\la/k)$ and define a function $\hf(r,\la)$ by means of Eq.~\eqref{ODEsza}, i.e., as
\[
\hf(r,\la):=\frac{L_Mf(r,\la)+pk\big(2\csc^2\sk r+n-p-1\big)f(r,\la)}{2pk\,\cos\sk
r\csc^2\sk r}\,.
\]
Then the function $\psi$ given by~\eqref{ansatzRiem} solves Eq.~\eqref{Poisson} for $\om\in\Om^p(M)\otimes\Om^q(\Si)$.
\end{theorem}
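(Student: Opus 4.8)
The plan is to apply $\De_{M\times\Si}=\De_M+\De_\Si$ to the ansatz~\eqref{ansatzRiem} and show that the three integrations collapse, one after another, to $\om$. The reduction~\eqref{LaplacianMSi} is already in hand: since $w_q(\cdot,\la)$ is an $L_q$-eigenfunction with eigenvalue $\la$ by Proposition~\ref{P:spectral}(i), Lemma~\ref{L:dim2} turns $\De_\Si$ into multiplication by $\la$, while~\eqref{keyRiem} turns $\De_M$ into the radial operators $F,\hF$ acting on $f,\hf$. The first thing I would record is that, with $f=\overline{w_q(0,\la)}\,\Phi_{\sign k}$ and $\hf$ defined through~\eqref{ODEsza}, the pair $(f,\hf)$ solves the coupled system~\eqref{ODEs} \emph{classically} on $(0,\diam(M))$: the first equation holds by the very definition of $\hf$, and since~\eqref{fourth} was obtained by eliminating $\hf$ from~\eqref{ODEsz}, the fact that $\Phi$ solves~\eqref{fourth} (Proposition~\ref{P:fourth1}) is equivalent to the second equation also holding. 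Consequently $F(f,\hf)+\la f$ and $\hF(f,\hf)+\la\hf$ vanish as ordinary functions for $r>0$, so the only surviving contribution to the $\dd\mu_M$-integral is distributional and supported at $r=0$.

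The second step is to extract this $r=0$ contribution. The operators $F,\hF$ contain the second-order radial operator $L_M$, and~\eqref{at0} is precisely the fundamental-solution asymptotics: $f\sim\text{const}\cdot r^{2-n}$ for $n\ge3$ (resp.\ $\sim-\tfrac1{2\pi}\log r$ for $n=2$), and one checks from~\eqref{ODEsza} that $\hf$ has the \emph{same} leading singularity. Interpreting~\eqref{keyRiem} distributionally (as permitted by the remark following it), the $\dd\mu_M$-integral of $(F(f,\hf)+\la f)\,\cM_r\om$ reduces to the point-mass contribution at $r=0$, whose weight is governed by $\lim_{r\downarrow0}m(r)f'(r)$; the normalization in~\eqref{at0} is chosen exactly so that this weight is $\overline{w_q(0,\la)}$, i.e.\ a unit delta after factoring out the prefactor, and the identical weight appears in the $\hcM_r$ term because $f$ and $\hf$ share their leading singularity. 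Using that the radius-zero spherical means satisfy $\lim_{r\downarrow0}(\cM_r+\hcM_r)=\mathrm{id}$ on the $M$-factor (parallel transport along a degenerate geodesic is the identity), the $\dd\mu_M$-integral collapses to
\[
\De_{M\times\Si}\psi=\int_{c_{\vka,q}}^{\infty}\!\int_0^{\diam(\Si)} w_q(s,\la)\,\overline{w_q(0,\la)}\,(\cS_s+\hcS_s)\om\,\dd\mu_\Si(s)\,\dd\rho_q(\la)\,.
\]

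The third step reconstructs the $\Si$-factor by the spectral theorem. Performing the $\la$-integral first and invoking the unitarity and inversion statements of Proposition~\ref{P:spectral}, the kernel $\int_{c_{\vka,q}}^{\infty}w_q(s,\la)\,\overline{w_q(0,\la)}\,\dd\rho_q(\la)$ is the reproducing kernel of $L^2(\dd\mu_\Si)$ based at $s=0$ (the bar being immaterial since $w_q$ is real-valued), so the $\dd\mu_\Si$-integral evaluates the continuous integrand $(\cS_s+\hcS_s)\om$ at $s=0$. Since $\lim_{s\downarrow0}(\cS_s+\hcS_s)=\mathrm{id}$ on the $\Si$-factor, this returns $\om$, completing the computation.

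The main obstacle is the rigorous justification of the second step: one must show that the genuinely non-integrable algebraic ($\csc^2$) terms in $F+\la f$ and $\hF+\la\hf$ contribute no spurious mass at $r=0$ and that only the $L_M$ boundary term survives, with the precise constant predicted by~\eqref{at0}. This is cleanest if one writes every radial integral as $\lim_{\epsilon\downarrow0}\int_\epsilon^{\diam(M)}$, integrates the $L_M$ (hence $\De_M$) term by parts to expose the boundary terms at $r=\epsilon$, and uses the classical vanishing of~\eqref{ODEs} to kill the interior. Secondary technical points, to be dispatched using the decay and analyticity of $\Phi_{\sign k}$ from Proposition~\ref{P:fourth1} (controlling the endpoint $r\to\diam(M)$, and for $k>0$ the analyticity at the antipode, which is where the solvability conditions~\eqref{solv} enter) together with the growth of $\rho_q$ from Proposition~\ref{P:spectral}, are the absolute convergence of the triple integral defining $\psi$ and the legitimacy of Fubini and of moving $\De_{M\times\Si}$ inside. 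The degenerate cases $p=0$ and $p=n$, where $\hcM_r\om=0$ (resp.\ $\cM_r\om=0$) by Example~\ref{ex1} so that the $\hf$-term (resp.\ the $f$-term) drops out, follow from the same argument with the corresponding term absent.
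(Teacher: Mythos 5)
Your proposal is correct and follows essentially the same route as the paper: verify that $(f,\hf)$ solves the coupled system~\eqref{ODEs} away from $r=0$, extract the Dirac mass $\overline{w_q(0,\la)}\,\de_0$ from the common $r^{2-n}$ (or $\log$) singularity via~\eqref{at0} and $\cM_0+\hcM_0=\mathrm{id}$, and then collapse the remaining $s$- and $\la$-integrals by the spectral decomposition of Proposition~\ref{P:spectral} together with the pointwise convergence of Lemma~\ref{L:unif}. Your explicit flagging of the delta-extraction as the step needing careful justification, and of the degenerate cases $p=0,n$ where one spherical mean vanishes, goes slightly beyond what the paper spells out but does not change the argument.
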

\begin{proof}
We start by noticing that, by Proposition~\ref{P:fourth1} and the restriction on the possible values of $(p,q)$, the function $\Phi_{\sign k}(\sin^2\frac{\sk r}2,\la/k)$ is well defined for all values of $r$ and $\la$ in the integration range. It should be noticed that the definition of $\hf(\cdot,\la)$ ensures that it has the same asymptotic behavior at $0$ that $f(\cdot,\la)$, namely~\eqref{at0}
\[
\frac{\overline{w_q(0,\la)}}{(n-2)|\SS^{n-1}|\,r^{n-2}}+O(r^{3-n})\,.
\]
An important observation is that the distribution defined by $F(f(\cdot,\la),\hf(\cdot,\la))\,\dd\mu_\Si$ (which equals $\hF(f(\cdot,\la),\hf(\cdot,\la))\,\dd\mu_\Si$) is in fact $\overline{w_q(0,\la)}$ times the Dirac delta supported at $0$, since
\begin{multline}\label{intM}
\int_0^{\diam(M)}\Big[f(r,\la)\, L_M\vp(r)+pk\Big(\big(2\csc^2\sk r+n-p-1\big)f(r,\la)\\-2\cos\sk
r\csc^2\sk r\,\hf(r,\la)\Big)\vp(r)\Big]\dd\mu_\Si(r)=\overline{w_q(0,\la)}\,\vp(0)
\end{multline}
for all $\vp\in C^\infty_0([0,\diam(M)])$. This immediately stems from the fact that $F(f(\cdot,\la),\hf(\cdot,\la))(r)$ is zero for all $r\neq0$ by construction (cf.\ Proposition~\ref{P:fourth1}) and the asymptotic behavior of $f$ and $\hf$ at zero. In particular, as $\cM_0+\hcM_0$ is the identity map it easily follows that
\begin{equation*}
\int_0^{\diam(M)} \big[\big(F(f,\hf)+\la f\big)\cM_r+\big(\hF(f,\hf)+\la\hf\big)\hcM_r\big]\om\,\dd\mu_M(r)=\om\,.
\end{equation*}
As a consequence of this, Eq.~\eqref{LaplacianMSi} reduces to
\begin{align}\label{intla}
\De\psi&=\int \overline{w(0,\la)}\, w(s,\la)\,\big(\cS_s+\hcS_s\big)\om\,\dd\mu_\Si(s)\,\dd\rho_q(\la)\,.
\end{align}
When the integral in $\la$ ranges over $[c_{q,\ka},\infty)$, it immediately follows from Proposition~\ref{P:spectral} and Lemma~\ref{L:unif} on the pointwise convergence of the integral~\eqref{L2conv} that
\begin{align*}
\De\psi&=\big(\cS_0+\hcS_0\big)\om=\om\,,
\end{align*}
and the claim in the statement follows.
\end{proof}
\begin{remark}
Being constructed using only the distance function and the metric on $M$ and $\Si$, the symmetric Green's function that we have constructed is equivariant under the isometries of $M\times\Si$.
\end{remark}
\begin{remark}
Some comments on the uniqueness of the solution to Eq.~\eqref{Poisson} are in order. When $M\times\Si$ is compact, it follows from standard Hodge theory that for the above values of $(p,q)$ we have constructed the only $L^2$ solution to the equation. The case when $M$ or $\Si$ is noncompact can be analyzed using that $\ker(\De_{M\times\Si})=\ker(\De_M)\otimes\ker(\De_\Si)$ and that the kernel of the Hodge Laplacian of the hyperbolic $m$-space acting on $s$-forms is $\{0\}$ if $n\neq 2s$ and infinite dimensional otherwise~\cite{Do79,Do80}. Hence for the above values of $(p,q)$ Theorem~\ref{T:mainR} also yields the only $L^2$ solution to~\eqref{Poisson} when $k>0$, $\vka<0$ and $(p,q)\not\in\{(0,1),(n,1)\}$, when $k<0$, $\vka>0$ and $(p,q)\not\in\{(\frac n2,0),(\frac n2,2)\}$, and when $k<0$, $\vka<0$ and $(p,q)\neq(\frac n2,1)$.
\end{remark}

\section{Lorentzian products}
\label{S:Lor}

In this section we shall solve the equation
\begin{equation}\label{wave}
\De_{M\times\Si}\psi=\om
\end{equation}
for a compactly supported form $\om\in\Om^p(\dom)\otimes\Om^q(\Si)$
by constructing an advanced Green's operator for $\De_{M\times\Si}$.
We assume that $M$ and $\Si$ respectively have Lorentzian and
Riemannian signature. As in Section~\ref{S:sphericalLor}, if $M$ is
not globally hyperbolic ($k<0$) we restrict ourselves to a
geodesically normal domain $\dom\times\Si$. If $M$ is globally
hyperbolic ($k\geq0$) we simply set $\dom:=M$. The symbols $\cM_r$
and $\hcM_r$ will stand for the Lorentzian spherical means in
$\dom$, but other than that we will use the same notation as in the
previous section. We recall that for any globally hyperbolic
manifold $\dom\times\Si$ there exists a unique advanced Green's
operator $\Om^*_0(\dom\times\Si)\to\Om^*(\dom\times\Si)$ for the
Hodge Laplacian~\cite{BGP07}.

As discussed in the previous section, when
$(p,q)\in\{(0,0),(0,2),(n,0),(n,2)\}$ Eq.~\eqref{wave} is equivalent
by Hodge duality to the scalar wave equation, whose Green's function
is well known~\cite{He84}. Hence we shall assume that $(p,q)$ does
not take any of the above values and that both $k$ and $\vka$ are
nonzero.

The results in the previous section and the definition of the Riesz potentials strongly suggest that we analyze the system
of ordinary differential equations
\begin{subequations}\label{FhFLor}
\begin{align}
F\big(f_\al(\cdot,\la),\hf_\al(\cdot,\la)\big)(r)+\la\,f_\al(r,\la)&=C_{\al-2}\, r^{\al-n-2}\,,\\ \hF\big(f_\al(\cdot,\la),\hf_\al(\cdot,\la)\big)(r)+\la\,\hf_\al(r,\la)&=C_{\al-2}\, r^{\al-n-2}\,,
\end{align}
\end{subequations}
where $\la$ is positive, $C_\al$ is given by~\eqref{Cal} and we
assume for the moment that $\Re\al> n+2$. We find it convenient to
write this equation in the variable $z:=\sin^2\frac{\sk r}2$,
writing $f_\al(z)$ or $\hf_\al(z)$ for the expression of the
functions $f_\al(r,\la)$ or $\hf_\al(r,\la)$ in terms of $z$ with
some abuse of notation. Defining the function
$h_\al(z):=-\frac1kC_{\al-2}\big(\frac2\sk\arcsin\sqrt
z\big)^{\al-n-2}$, Eqs.~\eqref{FhFLor} now read
\begin{subequations}\label{ODEszLor}
\begin{align}
z(1-z)f_\al''(z)+\frac n2(1-2z)f_\al'(z)&-\bigg[\ell+p\bigg(\frac1{2z(1-z)}+n-p-1\bigg)\bigg]f_\al(z)=\notag\\
&\hspace*{12em}=h_\al(z)-\frac{p(1-2z)}{2z(1-z)}\hf_\al(z)\,,\label{ODEszaLor}\\
z(1-z)\hf_\al''(z)+\frac n2(1-2z)\hf_\al'(z)&-\bigg[\ell+(n-p)\bigg(\frac1{2z(1-z)}+p-1\bigg)\bigg]\hf_\al(z)=\notag\\
&\hspace*{12em}=h_\al(z)-\frac{(n-p)(1-2z)}{2z(1-z)}f_\al(z)\label{ODEszbLor}\,,
\end{align}
\end{subequations}
where $\ell:=\la/k$. We can combine Eqs.~\eqref{ODEszLor} to obtain a single fourth order equation for $f_\al$, namely
\begin{equation}\label{fourthLor}
\frac{\dd^4f_\al}{\dd z^4}+\sum_{j=0}^3Q_j(z)\,\frac{\dd^jf_\al}{\dd z^j}=H_\al(z)\,,
\end{equation}
where the rational functions $Q_j$ were defined in Section~\ref{S:Riem},
\[
H_\al(z):=q_0(z)h_\al(z)+q_1(z)h_\al'(z)+q_2(z)h_\al''(z)
\]
and
\begin{align*}
q_2(z)&=\frac{2 (z-1)^2 z^2}{p (2 z-1)}\,,\qquad
q_1(z)= \frac{(z-1) z \left(4 n z^2+8 z^2-4 n z-8 z+n+4\right)}{p (2 z-1)^2}\,,\\
q_0(z)&=\frac{2 z \left(\ell (z-1) (1-2 z)^2+p (-z p+p+n (z-1)+z) (1-2 z)^2-2 z+2\right)}{p (2 z-1)^3}\,.
\end{align*}
Using the results from the previous section it is not difficult to prove the following

\begin{proposition}\label{P:PhiLor}
Under the above hypotheses, Eq.~\eqref{fourthLor} has a unique solution $\Phi_{\al,+}(z,\ell)$ which is continuous in $[0,1]$ if $k<0$, and a unique solution $\Phi_{\al,-}(z,\ell)$ which is continuous in $(-\infty,0]$ and has the fastest possible decay at infinity if $k<0$. These solutions are real for real $\al$.
\end{proposition}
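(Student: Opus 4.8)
The plan is to regard \eqref{fourthLor} as the inhomogeneous companion of the homogeneous Fuchsian equation \eqref{fourth} analyzed in Proposition~\ref{P:fourth1}. Since the two equations share the same principal part, they have identical regular singular points $0,\tfrac12,1,\infty$ and identical characteristic exponents: in particular $-\tfrac n2,1-\tfrac n2,0,1$ at $0$ and at $1$, the values $0,1,3,4$ at $\tfrac12$ (so every solution is automatically analytic there), and at $\infty$ the four exponents displayed in the proof of Proposition~\ref{P:fourth1}. Admissibility therefore means, in both regimes, excluding the two ``bad'' local solutions $z^{-n/2},z^{1-n/2}$ at $0$, and either the analogous pair $(1-z)^{-n/2},(1-z)^{1-n/2}$ at $1$ (range $z\in(0,1)$, i.e.\ $M$ compact, giving $\Phi_{\al,+}$) or the two slowest-decaying modes at $\infty$ (range $z\in(-\infty,0)$, i.e.\ $M$ noncompact, giving $\Phi_{\al,-}$).

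First I would record the behaviour of the source $H_\al=q_0h_\al+q_1h_\al'+q_2h_\al''$ at the boundary. Near $0$ one has $h_\al\sim c\,z^{(\al-n-2)/2}$, and a direct substitution of the explicit $q_0,q_1,q_2$ gives $H_\al(z)=O\big(z^{(\al-n-2)/2}\big)$, a source that vanishes at $0$ because $\Re\al>n+2$. Feeding this exponent into the indicial polynomial at $0$, namely $\sigma(\sigma-1)\big[(\sigma-2)(\sigma-3)+(n+4)(\sigma-2)+\tfrac14(n+2)(n+4)\big]$ with roots $-\tfrac n2,1-\tfrac n2,0,1$, and observing that the shifted value $\tfrac12(\al-n-2)+4$ is never a root when $\Re\al>n+2$, yields a local particular solution asymptotic to $z^{(\al-n+6)/2}$, hence continuous and vanishing at $0$. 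At the far boundary the source is harmless as well: since $\arcsin\sqrt z=\arccos\sqrt{1-z}$ is analytic in $\sqrt{1-z}$, the zeros of $q_1,q_2$ at $z=1$ exactly offset the derivatives of $h_\al$, so $H_\al$ stays bounded at $1$, while for $z\to-\infty$ the geodesic radius $\tfrac2{\sk}\arcsin\sqrt z$ grows only like $\log|z|$, so $H_\al$ grows sub-polynomially and cannot perturb the decay exponents at $\infty$.

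Next I would assemble a global particular solution with the correct boundary behaviour by variation of parameters, using as fundamental system the local solutions $\phi_\nu$ of \eqref{fourth} furnished by Proposition~\ref{P:fourth1}. Integrating each $\phi_\nu$ against $H_\al$ from the endpoint at which it is admissible produces a particular solution that is already continuous at $0$; its expansion at the far boundary will in general still contain the two inadmissible modes. I then add the linear combination $c_1\phi_0+c_2\phi_1$ of the two homogeneous solutions that are continuous at $0$ so as to cancel exactly those two modes. This is a $2\times2$ linear system whose determinant is precisely the quantity shown to be nonzero in Proposition~\ref{P:fourth1} under the hypothesis $p\neq0,n$ or $\ell\neq0$; here $\la>0$ forces $\ell=\la/k\neq0$, so the system is uniquely solvable and pins down $c_1,c_2$. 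This produces $\Phi_{\al,+}$ on $[0,1]$ and $\Phi_{\al,-}$ on $(-\infty,0]$.

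Uniqueness follows because the difference of two admissible solutions solves \eqref{fourth}, is continuous at $0$---hence lies in $\myspan\{\phi_0,\phi_1\}$---and is admissible at the far boundary; the nonvanishing of the same $2\times2$ determinant forces it to vanish. Reality for real $\al$ is immediate, since the $Q_j$ are real, $C_{\al-2}$ and the geodesic radius $\tfrac2{\sk}\arcsin\sqrt z$ are real, hence so is $h_\al$ and therefore $H_\al$, and the whole construction stays inside the real fundamental system of Proposition~\ref{P:fourth1}. The step I expect to be the real obstacle is the last part of the construction: verifying that the two-parameter homogeneous correction always suffices---for $\Phi_{\al,+}$, to remove both singular modes at $z=1$ once the half-integer ($\sqrt{1-z}$) terms of the variation-of-parameters integral are accounted for, and for $\Phi_{\al,-}$, to reach the fastest decay dictated by the exponents at $\infty$. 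Both reduce to the single nondegeneracy already isolated in Proposition~\ref{P:fourth1}, which is exactly why the result is ``not difficult to prove'' once that proposition is in hand.
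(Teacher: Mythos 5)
Your overall strategy is the same as the paper's: view \eqref{fourthLor} as the inhomogeneous companion of the Fuchsian equation \eqref{fourth}, produce a particular solution by variation of constants, and then correct it by the homogeneous solutions $\phi_\nu$ of Proposition~\ref{P:fourth1} so as to enforce admissibility at the two ends, with uniqueness reducing to the nondegeneracy already isolated in that proposition. Your local analysis at $z=0$ (the indicial polynomial $\sigma(\sigma-1)[(\sigma-2)(\sigma-3)+(n+4)(\sigma-2)+\tfrac14(n+2)(n+4)]$ and the forced exponent $\tfrac12(\al-n+6)$) is correct and in fact more explicit than what the paper records.

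There is, however, one genuine gap in the compact case, i.e.\ for the solution $\Phi_{\al,+}$ that must be continuous on all of $[0,1]$: you only examine $H_\al$ ``at the boundary,'' but $H_\al$ has an \emph{interior} singularity at $z=\tfrac12$. The function $h_\al$ and its derivatives are perfectly regular there, while $q_2,q_1,q_0$ carry the denominators $p(2z-1)$, $p(2z-1)^2$, $p(2z-1)^3$, so $H_\al$ blows up like $(z-\tfrac12)^{-3}$. Continuity of the particular solution across $z=\tfrac12$ is therefore not automatic, and this is precisely the point the paper's proof takes care to address: since the characteristic exponents of the homogeneous equation at $\tfrac12$ are $(0,1,3,4)$, the variation-of-constants integrals still converge and yield a solution that is continuous (indeed analytic) at $\tfrac12$. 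Concretely, the Wronskian behaves like $(z-\tfrac12)^{2}$ and the cofactors like $(z-\tfrac12)^{5-\nu_j}$, so each term $\phi_j\int (W_j/W)H_\al$ is $O\big((z-\tfrac12)\log(z-\tfrac12)\big)$ at worst. Without this verification your argument only establishes continuity near the endpoints $0$ and $1$, not on $[0,1]$. A secondary inaccuracy: at $z\to-\infty$ you assert that $H_\al$ grows sub-polynomially because the geodesic radius grows like $\log|z|$, but $q_0\sim cz$, $q_1\sim cz^2$, $q_2\sim cz^3$ there, so $H_\al$ in fact grows like $|z|$ times powers of $\log|z|$; the construction survives (the distinguished solution is still the one obtained by killing the two most slowly decaying homogeneous modes), but the reason you give is not correct. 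Finally, you rightly read the clause about $\Phi_{\al,+}$ on $[0,1]$ as the case $k>0$; the ``if $k<0$'' in that part of the statement is a typo which the paper's own proof silently corrects.
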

\begin{proof}
Let us assume that $n\geq3$ and $k<0$. We omit the discussion of the case $n=2$, which goes along the same lines by replacing $z^{1-\frac n2}$ by $\log |z|$ in the discussion. We saw in Proposition~\ref{P:fourth1} that the homogeneous equation~\eqref{fourth} has four regular singularities at $0$, $\frac12$, $1$ and $\infty$. As $H_\al$ is continuous on $(-\infty,0]$ for $\al> n+2$, the method of the variation of constants yields a particular solution $\Phi_0$ of~\eqref{fourthLor} which is analytic in $(-\infty,0)$ with possible singularities at $0$ of order $z^{\frac n2}$ and $z^{1-\frac n2}$. The function in the statement of the theorem is thus given by
\begin{equation}\label{Prop4th}
\Phi_{\al,-}(z,\ell)=\Phi_0(z)+c_1\phi_{-\frac n2}(z)+c_2\phi_{1-\frac n2}(z)+c_3\phi_0(z)+c_4\phi_1(z)
\end{equation}
in the notation of Proposition~\ref{P:fourth1}. The constants $c_1$ and $c_2$ are chosen so that
\[
\lim_{z\uparrow 0}z^{\frac n2-1}\Phi_{\al,-}(z,\ell)=0\,,
\]
while $c_3$ and $c_4$ are chosen so as to obtain the fastest possible decay at infinity.

Suppose now that $k>0$. Let us observe that for $\al> n+2$ the function $H_\al$ is continuous in $[0,\frac12)\cup(\frac12,1]$, whereas it diverges as $(z-\frac12)^{-3}$ at $\frac12$. However, as the characteristic exponents of the homogeneous equation at $\frac12$ are $(0,1,3,4)$, it is standard that the method of the variation of constants yields a particular solution $\Phi_0$ of Eq.~\eqref{fourthLor} which is continuous (actually, analytic) at $\frac12$. The desired solution is obtained, in the notation of Proposition~\ref{P:fourth1}, as
\[
\Phi_{\al,+}(z,\ell)=\Phi_0(z)+c_1\phi_{-\frac n2}(z)+c_2\phi_{1-\frac n2}(z)+c_3\phi_0(z)+c_4\phi_1(z)\,.
\]
Here the constants $c_j$ are chosen so that
\[
\lim_{z\downarrow 0}z^{\frac n2-1}\Phi_{\al,+}(z,\ell)=\lim_{z\downarrow 0}(1-z)^{\frac n2-1}\Phi_{\al,+}(z,\ell)=0\,,
\]
i.e., so as to remove the singularities of order $\frac n2$ and $\frac n2-1$ at $0$ and at $1$. From the proof of Proposition~\ref{P:fourth1} it stems that this determines the constants $c_j$.
\end{proof}

The $\La^p(T_x^*M)$-valued distribution defined for each $x\in \dom$ as
\begin{align*}
\widetilde \cR^{\al}_{x,\dom}(\phi)&:=C_{\al}\int r^{\al-n}\big(\cM_r+\hcM_r\big)\phi(x)\,\dd\mu_M(r)\\
&=C_{\al}\int_{J^+_\dom(x)}\rho(x,x')^{\al-n}\big(\tau(x,x')+\htau(x,x')\big)\cdot' \phi(x')\,\dd x'\,,\qquad \phi\in\Om^p_0(M)\,,
\end{align*}
is easily seen to be holomorphic for $\Re\al>n$, with fixed $x$ and $\la$. By Theorem~\ref{T:Riesz} the function $\al\mapsto\widetilde \cR^{\al}_{x,\dom}$ admits an entire extension, and as $\tau(x,x)+\htau(x,x')={\rm id}_{\La^p(T^*_xM)}$ it follows from this theorem that
\begin{equation}\label{Rtilde}
\widetilde \cR^0_{x,\dom}(\phi)=\phi(x)\,.
\end{equation}

In what follows we shall set $f_\al(r,\la)=\Phi_{\al,\sign k}(\sin^2\frac{\sk r}2,\la/k)$, with $\Phi_{\al,\pm}$ as in Proposition~\ref{P:PhiLor}, and define $\hat f_\al$ so that Eq.~\eqref{FhFLor} holds true, that is, as
\[
\hf_\al(r,\la):=\frac{L_Mf_\al(r,\la)+pk\big(2\csc^2\sk r+n-p-1\big)f_\al(r,\la)-C_{\al-2}r^{\al-n-2}}{2pk\,\cos\sk
r\csc^2\sk r}\,.
\]
It is natural to define another vector-valued distribution by
\[
\cR^{\al,\la}_{x,\dom}(\phi):=\int\big(f_\al(r,\la)\,\cM_r+\hf_\al(r,\la)\hcM_r\big)\phi(x)\,\dd\mu_M(r)\,,\qquad\phi\in\Om^p_0(M)\,,
\]
for $\Re\al>n-2$. By Eqs.~\eqref{keyLor}, \eqref{FhFLor} and~\eqref{Rtilde} it follows that the two distributions that we have just defined are related by
\[
(\De_M+\la)\cR^{\al,\la}_{x,\dom}=\widetilde \cR^{\al-2}_{x,\dom}\,.
\]
In fact, from the above equation and Proposition~\ref{P:PhiLor} it is standard~\cite{Ri51,BGP07} that the function $\al\mapsto\cR^{\al,\la}_{x,\dom}$ can be holomorphically extended to the whole complex plane. Thus we are led to a useful generalization of Corollary~\ref{C:Riesz} that the summarize in the following

\begin{proposition}\label{P:RieszForms}
The distribution $\cR^{\al,\la}_{x,\dom}$ is a holomorphic function of $\al\in\CC$ for fixed $x\in\dom$, $\la\in\spec(L_q)$. Moreover, for any $\phi\in\Om^p_0(\dom)$ the differential form $\Phi(x):=\cR^{\al,\la}_{x,\dom}(\phi)$ solves the equation $(\De_M+\la)\Phi=\phi$ in $\dom$.
\end{proposition}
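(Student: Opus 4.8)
The plan is to treat $\cR^{\al,\la}_{x,\dom}$ as the bundle-valued, $\la$-shifted analogue of the Riesz potential of Theorem~\ref{T:Riesz}, so that the holomorphy reduces to the standard analytic continuation of Riesz distributions, while the second assertion is a direct consequence of the fundamental spherical-means identity~\eqref{keyLor} together with the defining relations~\eqref{FhFLor}. For the holomorphy I would argue in two stages. First, for $\Re\al>n-2$ the defining integral converges absolutely and is holomorphic in $\al$: for fixed $x$ and $\phi\in\Om^p_0(\dom)$ the weight $r\mapsto(\cM_r+\hcM_r)\phi(x)\,m(r)$ is a smooth function of compact support in $[0,\infty)$ that vanishes to order $r^{n-1}$ at the vertex, and by Proposition~\ref{P:PhiLor} the radial profiles $f_\al(r,\la)=\Phi_{\al,\sign k}(\sin^2\frac{\sk r}2,\la/k)$ and the associated $\hf_\al$ depend holomorphically on $\al$, since the Fuchsian equation~\eqref{fourthLor} has coefficients and inhomogeneity $H_\al$ holomorphic in $\al$ and the selection constants are fixed by $\al$-holomorphic connection data.

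\textbf{Continuation.} To continue past $\Re\al=n-2$ I would exploit that the entire $\al$-dependence of $f_\al,\hf_\al$ is inherited from the single Riesz-type forcing $C_{\al-2}\,r^{\al-n-2}$ on the right-hand side of~\eqref{FhFLor}. Expressing $f_\al$ and $\hf_\al$ through the variation-of-constants operator of~\eqref{fourthLor}, the pairing $\cR^{\al,\la}_{x,\dom}(\phi)$ becomes the pairing of the one-dimensional family $r\mapsto C_{\al-2}\,r_+^{\al-n-2}$ against a fixed smooth compactly supported weight; the normalisation factor $C_{\al}$ of~\eqref{Cal} cancels the poles of $r_+^{\al-n-2}$ at $\al-n-2\in\{-1,-2,\dots\}$ exactly as in the scalar case of Theorem~\ref{T:Riesz}. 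This is the step rendered ``standard'' in~\cite{Ri51,BGP07}, and it produces an entire extension that is automatically consistent with the functional relation $(\De_M+\la)\cR^{\al,\la}_{x,\dom}=\widetilde\cR^{\al-2}_{x,\dom}$, whose right-hand side is entire by Theorem~\ref{T:Riesz}.

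\textbf{The equation.} For the second assertion I would insert $f=f_\al(\cdot,\la)$, $\hf=\hf_\al(\cdot,\la)$ and $\om=\phi$ into~\eqref{keyLor} and add $\la\,\Phi$, obtaining
\[
(\De_M+\la)\Phi=\int_0^\infty\Big[\big(F(f_\al,\hf_\al)+\la f_\al\big)\cM_r+\big(\hF(f_\al,\hf_\al)+\la\hf_\al\big)\hcM_r\Big]\phi\,\dd\mu_M(r)\,.
\]
By construction~\eqref{FhFLor} both bracketed coefficients equal $C_{\al-2}\,r^{\al-n-2}$, so the right-hand side collapses to $C_{\al-2}\int_0^\infty r^{\al-n-2}(\cM_r+\hcM_r)\phi\,\dd\mu_M(r)=\widetilde\cR^{\al-2}_{x,\dom}(\phi)$. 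Specialising $\al=2$ and invoking~\eqref{Rtilde}, i.e.\ $\widetilde\cR^0_{x,\dom}(\phi)=\phi(x)$, yields $(\De_M+\la)\Phi=\phi$ in $\dom$, which is the claim; this is precisely the generalisation of Corollary~\ref{C:Riesz} anticipated in the text.

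\textbf{Main obstacle.} I expect the delicate point to be the continuation across $\Re\al=n-2$: one must verify that the potential poles of $\cR^{\al,\la}_{x,\dom}$, arising both from the non-integrability of $r^{\al-n-2}$ at the vertex and from the $\csc^2\sk r$ singularities entering $\hf_\al$, are annihilated by the zeros of $C_{\al-2}$, uniformly in the fibrewise spherical-means structure. A secondary technical point is to confirm that the constants selecting $\Phi_{\al,\pm}$ in Proposition~\ref{P:PhiLor} depend holomorphically on $\al$, which amounts to the non-degeneracy of the connection problem for~\eqref{fourthLor} for the admissible values of $\la\in\spec(L_q)$.
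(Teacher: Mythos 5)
Your proposal matches the paper's own argument: the functional relation $(\De_M+\la)\cR^{\al,\la}_{x,\dom}=\widetilde\cR^{\al-2}_{x,\dom}$ is derived from \eqref{keyLor} and \eqref{FhFLor}, the holomorphic continuation is delegated to the standard Riesz-distribution machinery of \cite{Ri51,BGP07} together with Proposition~\ref{P:PhiLor}, and the equation follows by evaluating at $\al=2$ via \eqref{Rtilde}. Your explicit identification of the delicate points (pole cancellation by $C_{\al-2}$ and $\al$-holomorphy of the selection constants) is a welcome elaboration of what the paper leaves as ``standard,'' but the route is the same.
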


The action of $\cR^{\al,\la}_{x,\dom}$ naturally defines a bundle-valued distribution on $M\times\Si$, which we do not distinguish notationally, which acts on the first factor of any $\om\in\Om^p(\dom)\otimes\Om^q(\Si)$ to yield an element of $\La^p(T^*_xM)\otimes\Om^q(\Si)$. Namely, if $\om(x,y)=\om_1(x)\otimes\,\om_2(y)$ this action reads as
\[
\cR^{\al,\la}_{x,\dom}(\om)(y):=\cR^{\al,\la}_{x,\dom}(\om_1)\otimes\,\om_2(y)\,,
\]
with $(x,y)\in \dom\times\Si$. This result allows us to express the solution of Eq.~\eqref{wave} as follows.

\begin{theorem}\label{T:mainLor}
The differential form
\[
\psi(x,y):=\int \overline{w_q(0,\la)}\, w_q(s,\la)\,\cR^{\al,\la}_{x,\dom}(\om)(y)\,\dd\mu_M(r)\,\dd\mu_\Si(s)\,\dd\rho_q(\la)\,,
\]
where the integral ranges over $[c_{q,\vka},\infty)\times(0,\diam(\Si))\times(0,\diam(M))$, solves Eq.~\eqref{wave} for any compactly supported $\om\in\Om^p(\dom)\otimes\Om^q(\Si)$.
\end{theorem}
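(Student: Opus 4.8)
The plan is to mirror the proof of Theorem~\ref{T:mainR}, with the Riemannian radial solution replaced by the form-valued Riesz potential $\cR^{\al,\la}_{x,\dom}$ of Proposition~\ref{P:RieszForms} evaluated at the distinguished parameter $\al=2$, which now carries the contribution of the Lorentzian factor $M$. As before I would work on each summand $\om\in\Om^p(\dom)\otimes\Om^q(\Si)$, split $\De_{M\times\Si}=\De_M+\De_\Si$, and evaluate the two contributions separately before recombining. Throughout, the $\Si$-factor is treated exactly as in Section~\ref{S:Riem}: the spherical means $\cS_s+\hcS_s$ act on the $\Si$-component of $\om$, and $w_q(\cdot,\la)$ is the formal eigenfunction furnished by Proposition~\ref{P:spectral}.

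First I would apply $\De_\Si$, which acts only on the $y$-variable through the spherical-mean expansion in $\Si$. Since $L_qw_q(s,\la)=\la\,w_q(s,\la)$ for $\rho_q$-almost every $\la$, Lemma~\ref{L:dim2} shows that $\De_\Si$ applied to the integral $\int w_q(s,\la)\,(\cS_s+\hcS_s)[\,\cdot\,]\,\dd\mu_\Si(s)$ equals $\la$ times the same integral; that is, $\De_\Si$ simply reproduces the factor $\la$. Next I would apply $\De_M$ to the Lorentzian Riesz part. By the relation $(\De_M+\la)\cR^{\al,\la}_{x,\dom}=\widetilde\cR^{\al-2}_{x,\dom}$ established just before Proposition~\ref{P:RieszForms}, specialized to $\al=2$, together with the identity $\widetilde\cR^{0}_{x,\dom}(\om)=\om$ from Eq.~\eqref{Rtilde} (which rests on $\tau(x,x)+\htau(x,x)=\mathrm{id}$), one gets $\De_M\cR^{2,\la}_{x,\dom}(\om)=\om-\la\,\cR^{2,\la}_{x,\dom}(\om)$.

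Combining the two pieces, the terms $-\la\,\cR^{2,\la}_{x,\dom}(\om)$ produced by $\De_M$ and $+\la\,\cR^{2,\la}_{x,\dom}(\om)$ produced by $\De_\Si$ cancel exactly, leaving
\[
\De_{M\times\Si}\psi=\int\overline{w_q(0,\la)}\,w_q(s,\la)\,(\cS_s+\hcS_s)\,\om\,\dd\mu_\Si(s)\,\dd\rho_q(\la)\,.
\]
This is precisely expression~\eqref{intla} already obtained in the Riemannian case, so the argument concludes identically: Proposition~\ref{P:spectral} together with the pointwise convergence in Lemma~\ref{L:unif} collapses the $\la$-integral via the inversion associated with $\overline{w_q(0,\cdot)}$, yielding $(\cS_0+\hcS_0)\om=\om$ because $\cS_0+\hcS_0$ is the identity.

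The main obstacle is the justification of the step $\De_M\cR^{2,\la}_{x,\dom}(\om)=\om-\la\,\cR^{2,\la}_{x,\dom}(\om)$ at the value $\al=2$, together with the legitimacy of interchanging $\De_{M\times\Si}$ with the triple integral. The identity $(\De_M+\la)\cR^{\al,\la}_{x,\dom}=\widetilde\cR^{\al-2}_{x,\dom}$ holds a priori only for $\Re\al$ large, where the defining integrals converge absolutely; it must be propagated to $\al=2$ by analytic continuation, using the holomorphy in $\al$ asserted in Proposition~\ref{P:RieszForms} and the entire extension in Theorem~\ref{T:Riesz}. Here the causal-support structure of $J^+_\dom(x)$, and the hypothesis that $M$ is globally hyperbolic (or that $\dom$ is a geodesically normal domain when $k<0$), guarantee that the distributions involved have the correct supports and that the advanced Green's operator is well defined, so that the convolution-type manipulations and the commutation of $\De_M$ with the $\la$- and $s$-integrals are valid. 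Once this Lorentzian-specific analytic-continuation and support bookkeeping is in place, the algebraic cancellation and the final spectral inversion are routine repetitions of the Riemannian argument.
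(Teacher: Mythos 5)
Your proposal is correct and follows essentially the same route as the paper's own (very terse) proof: split $\De_{M\times\Si}=\De_M+\De_\Si$, use the eigenfunction property of $w_q$ together with Lemma~\ref{L:dim2} to produce the factor $\la$, invoke Proposition~\ref{P:RieszForms} at the distinguished value $\al=2$ so that $(\De_M+\la)\cR^{2,\la}_{x,\dom}(\om)=\om$, and then collapse the remaining $(s,\la)$-integral via Proposition~\ref{P:spectral} and Lemma~\ref{L:unif} exactly as in Theorem~\ref{T:mainR}. Your additional remarks on analytically continuing the identity $(\De_M+\la)\cR^{\al,\la}_{x,\dom}=\widetilde\cR^{\al-2}_{x,\dom}$ down to $\al=2$ make explicit a point the paper delegates to Proposition~\ref{P:RieszForms} and the references therein.
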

\begin{proof}
If immediately follows from the definition of $w_q$ and Proposition~\ref{P:RieszForms} that
\begin{align*}
\De_{M\times\Si}\psi(x,y)&=\int \overline{w_q(0,\la)}\, w_q(s,\la)\,(\la+\De_M)\cR^{\al,\la}_{x,\dom}(\om)(y)\,\dd\mu_M(r)\,\dd\mu_\Si(s)\,\dd\rho_q(\la)\\
&=\int \overline{w_q(0,\la)}\, w_q(s,\la)\,\om(x,y)\,\dd\mu_\Si(s)\,\dd\rho_q(\la)\,.
\end{align*}
As in the proof of Theorem~\ref{T:mainR}, now Proposition~\ref{P:spectral} and Lemma~\ref{L:unif} show that
\[
\De_{M\times\Si}\psi(x,y)=(\cS_0+\hcS_0)\om(x,y)=\om(x,y)\,,
\]
completing the proof of the statement.
\end{proof}

\section{Constant curvature spaces}
\label{S:constant}

We saw in Eqs.~\eqref{ODEs} and~\eqref{FhFLor} that the radial
behavior of the Green's operator in the product manifolds
$M\times\Si$ is controlled by a fourth order analogue of the Heun
equation. On the other hand, it is well known~\cite{AJ86,Fo92} that
the Hodge Green's function on $M$ (as well as the scalar Green's
function of any simply connected rank 1 symmetric space~\cite{He84})
is controlled by simple hypergeometric equations: this merely
reflects that the geometry of $M\times\Si$ is considerably more
involved than that of its individual factors. For the sake of
completeness, in this short section we shall discuss the
simplifications that give rise to hypergeometric functions when one
considers the equation on $M$, that is, when the factor $\Si$
collapses to a point.

We shall therefore analyze the following system of coupled ODEs:
\begin{subequations}\label{ODEszCCS}
\begin{align}
z(1-z)f''(z)+\frac n2(1-2z)f'(z)&-\bigg[\ell+p\bigg(\frac1{2z(1-z)}+n-p-1\bigg)\bigg]f(z)=\notag\\
&\hspace*{12em}=h(z)-\frac{p(1-2z)}{2z(1-z)}\hf(z)\,,\label{ODEszaCCS}\\
z(1-z)\hf''(z)+\frac n2(1-2z)\hf'(z)&-\bigg[\ell+(n-p)\bigg(\frac1{2z(1-z)}+p-1\bigg)\bigg]\hf(z)=\notag\\
&\hspace*{12em}=h(z)-\frac{(n-p)(1-2z)}{2z(1-z)}f(z)\label{ODEszbCCS}\,.
\end{align}
\end{subequations}
When $h(z)=0$ this equation controls the radial part of the Green's operator for the Poisson equation on $M\times\Si$ studied in Section~\ref{S:Riem}, whereas when $h$ is the function $h_\al$ defined in Section~\ref{S:Lor} we obtain the equation for the radial part of the retarded Green's function.
It is natural to look for functions $a,\ha,b$ and $\hb$ (possibly depending on $p$ and $\ell$) such that the functions
\begin{equation}\label{fg}
g(z):=f'(z)+p\ms\ms a(z)\ms f(z)-p\ms b(z)\ms\hf(z)\,,\qquad \hg(z):=\hf'(z)+(n-p)\ms\ha(z)\ms\hf(z)-(n-p)\ms\hb(z)\ms f(z)
\end{equation}
satisfy a first-order system of ODEs (equivalent to~\eqref{ODEszCCS}) of the form
\begin{subequations}\label{1st}
\begin{align}
z(1-z)\ms g'(z)+(n-p)A(z)\ms g(z)+pB(z)\ms \hg(z)&=h(z)\,,\\
z(1-z)\ms \hg'(z)+p\hA(z)\ms \hg(z)+(n-p)\hB(z)\ms g(z)&=h(z)\,.
\end{align}
\end{subequations}
When this reduction can be performed, it is possible to express the functions $f$ and $\hf$ in terms of the solutions to the decoupled second-order equations for $f,\hf$ and $g,\hg$ that one obtains from~\eqref{fg} and~\eqref{1st}. As before, by Hodge duality and the fact that the scalar case is well known allows us to assume that $p\neq0,n$.

After some manipulations one finds that these equations amount to imposing the following conditions:
\begin{align*}
&(z-1) z b(z)-B(z)=0\,,\\
&(z-1) z \hb(z)-\hB(z)=0\,,\\
&n\big(\tfrac12-z\big)+(n-p) (z-1) z \ha(z)-p \hA(z)=0\,,\\
&n \big(\tfrac12- z\big)+p (z-1) z a(z)+(p-n) A(z)=0\,,\\
&\frac12 \bigg(\frac1{z-1}+\frac1z\bigg)+(p-n) A(z) b(z)+(p-n) B(z) \ha(z)+(z-1) z \
b'(z)=0\,,\\
&\frac12 \bigg(\frac1{z-1}+\frac1z\bigg)-p \hA(z) \hb(z)-p a(z) \hB(z)+ (z-1) z \hb'(z)=0\,,\\
&\frac1{2 (z-1) z}+1-n+p+\frac\ell p+(p-n) a(z) A(z)+(p-n) B(z) \hb(z)+(z-1) z \
a'(z)=0\,,\\
&\frac1{2 z(z-1)} +1- p +\frac\ell{n-p}-p \ha(z) \hA(z) - p b(z) \hB(z)+ (z-1) z \ha'(z)=0\,.
\end{align*}
Thus we have eight equations with eight unknowns, but solving the above system is in general a formidable task.

The crucial observation is that this system of ODE simplifies considerably when $\ell=0$. In fact, in this case we can set $a=\ha$ and $b=\hb$, which immediately leads to the solution
\begin{align*}
&a(z)=\ha(z)=\frac{1 - 2 z}{2 z (1 - z)}\,,\qquad b(z)=\hb(z)=-\frac1{2 z (1 - z)}\,.
\end{align*}
Thus one derives that when $\ell=0$ the equations satisfied by the new functions $g$ and $\hg$ are
\begin{subequations}\label{2ODEsCCS}
\begin{align}
z(1-z)g'(z)+(n-p)\big(\tfrac12-z\big)g(z)+\tfrac 12p\hg(z)&=h(z)\,,\label{2ODEsCCSb}\\
z(1-z)\hg'(z)+p\big(\tfrac12-z\big)\hg(z)+\tfrac12(n-p)g(z)&=h(z)\,,
\end{align}
\end{subequations}
and by isolating $\hg$ in~\eqref{2ODEsCCSb} one readily finds that
\[
z(1-z)g''(z)+\big(\tfrac n2+1\big)(1-2z)g'(z)-(n-p)(p+1)g(z)=H(z)\,,
\]
with $H(z):=h'(z)+\frac{p h(z)}{z-1}$. This is a hypergeometric equation, which can be solved in terms of associated Legendre functions by the variation of constants method.

Now it suffices to notice that  the equations controlling the radial behavior of the Green's operators on $M$ are obtained from those of $M\times\Si$ by collapsing $\Si$ to a point, which amounts to setting $\ell=0$. Thus from this discussion and the previous sections we recover  the (non-rigorous) result of Allen--Jacobson~\cite{AJ86} and Folacci~\cite{Fo92} that the Green's function for the Hodge Laplacian in a simply connected space of constant curvature can be expressed in terms of hypergeometric functions, as in the scalar case. Detailed albeit somewhat formal discussions can be found in the aforementioned references; details are omitted. It should be stressed that for arbitrary values of $\ell$ the radial equation~\eqref{ODEszaCCS} do not seem to admit an analogous reduction.

\section{The radial equation on surfaces }
\label{S:dim2}

In this section we shall prove Proposition~\ref{P:spectral} and derive
explicit formulas for the elements appearing in the statement of the
result. We shall always assume that $\vka\neq0$: indeed, when
$\vka=0$ we have $L_0=L_1=L_2$ and the spectral decomposition of
this operator claimed in the latter lemma reduces to the Hankel
transform. Since $L_0=L_2$, it is obviously sufficient to consider
the cases $q=0$ and $q=1$.

We shall define a new variable $t:=\sin^2\frac{\ska s}2$, which ranges over $(0,1)$ if $\vka>0$ and over $(-\infty,0)$ if $\vka<0$. Thus we can identify the Hilbert space
$L^2((0,\diam(M)),\dd\mu_\Si)$ with $L^2((0,1),\frac{4\pi}{|\vka|}\dd t)$ (if $\vka>0$) or $L^2((-\infty,0),\frac{4\pi}{|\vka|}\dd t)$ (if $\vka<0$) via a unitary transformation $V_\vka$, and write $L_q=\vka\,V_\vka^{-1} T_q V_\vka$ with
\begin{align}\label{Tq}
T_0:=-t(1-t)\frac{\pd^2}{\pd t^2}-(1-2t)\,\frac\pd{\pd t}\,,\qquad
T_1:=T_0+\frac1{1-t}\,.
\end{align}

It is not difficult to see that the singular differential operator $T_0$ (resp.\ $T_1$) is in the limit circle case at $0$ and $1$ (resp.\ at $0$), and in the limit point case at infinity (resp.\ at $1$ and at
infinity). For simplicity of notation we shall still denote by $T_q$ the self-adjoint operators defined by the action of the differential operators~\eqref{Tq} on the domains~\cite{DS88}
\begin{subequations}\label{Dom}
\begin{align}
\Dom(T_0)&:=\Big\{u\in H^1((0,1)): T_0u\in L^2((0,1)),\; \lim_{t\downarrow0}t\,u'(t)=\lim_{t\uparrow1}(1-t)\,u'(t)=0\Big\}\,,\\
\Dom(T_1)&:=\Big\{u\in H^1((0,1)): T_1u\in L^2((0,1)),\; \lim_{t\downarrow0}t\,u'(t)=0\Big\}
\end{align}
when $\vka>0$ and
\begin{align}
\Dom(T_0)&:=\Big\{u\in H^1((-\infty,0)): T_0u\in L^2((-\infty,0)),\; \lim_{t\uparrow0}t\,u'(t)=0\Big\}\,,\\
\Dom(T_1)&:=\Big\{u\in H^1((-\infty,0)): T_1u\in L^2((-\infty,0)),\; \lim_{t\uparrow0}t\,u'(t)=0\Big\}
\end{align}
\end{subequations}
when $\vka<0$. The domains of the operators $L_q$ defined in Section~\ref{S:Riem} are simply $\Dom(L_q)=V_\vka^{-1}(\Dom(T_q))$.

When $\Si$ is compact, the spectrum of $L_q$ is discrete and its spectral decomposition can be written as follows.

\begin{lemma}\label{L:spec+}
Let us suppose that $\ka>0$. Then Proposition~\ref{P:spectral} holds with
\begin{align*}
&\dd\rho_0(\la)=\bigg(\sum_{j\geq0}\de\big(\la-\vka j(j+1)\big)\bigg)\,\dd\la\,, \qquad \dd\rho_1(\la)=\bigg(\sum_{j\geq0}\de\big(\la-\vka(j^2+3j+2)\big)\bigg)\,\dd\la\,,\\
&w_0\big(s,\vka j(j+1)\big)=\bigg(\frac{\vka(2j+1)}{4\pi}\bigg)^{1/2} P_j(\cos\ska s)\,,\\
&w_1\big(s,\vka(j^2+3j+2)\big)=\bigg(\frac{\vka(2j+3)}{4\pi}\bigg)^{1/2} \cos^2\frac{\ska s}2\,P^{(0,2)}_j(\cos\ska s)\,,
\end{align*}
where $P_\nu$ and $P^{(a,b)}_\nu$ respectively denote the Legendre and Jacobi polynomials of degree $\nu$.
\end{lemma}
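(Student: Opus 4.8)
The plan is to prove Lemma~\ref{L:spec+} by reducing the spectral problem for $L_q$ to a classical singular Sturm--Liouville problem in the variable $t=\sin^2\frac{\ska s}2$, identifying the resulting operators $T_q$ with the Legendre/Jacobi differential operators, and then invoking the known orthogonality and completeness of the corresponding classical orthogonal polynomials. Concretely, since $L_q=\vka\,V_\vka^{-1}T_qV_\vka$ with $T_0$ and $T_1$ given by~\eqref{Tq}, it suffices to diagonalize $T_q$ on $L^2((0,1),\frac{4\pi}{|\vka|}\dd t)$ and then transfer the result back through $V_\vka$, picking up the factor $\vka$ in the eigenvalues. First I would treat $q=0$: the equation $T_0u=\mu u$ is, after writing $x=\cos\ska s=1-2t$, exactly the Legendre equation $\frac{\dd}{\dd x}\big((1-x^2)u'\big)+\mu u=0$. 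The limit-circle boundary conditions encoded in $\Dom(T_0)$ select precisely the solutions regular at both endpoints, which forces $\mu=j(j+1)$ with eigenfunctions the Legendre polynomials $P_j(\cos\ska s)$. Hence $L_0$ has eigenvalues $\vka j(j+1)$, justifying the stated $\dd\rho_0$ as the sum of Dirac masses at these points.

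Next I would handle $q=1$. Here $T_1=T_0+\frac1{1-t}$, and the extra potential term $\frac1{1-t}=\frac{2}{1+x}$ (in the variable $x=1-2t$) is exactly what converts the Legendre operator into a Jacobi operator with parameters $(a,b)=(0,2)$. The cleanest route is a gauge/substitution $u(t)=\cos^2\frac{\ska s}2\,v(t)=\frac{1+x}{2}\,v$, under which the shifted operator $T_1$ acting on $u$ becomes the Jacobi operator $-(1-t)t\,\partial_t^2+\cdots$ acting on $v$ whose regular polynomial eigenfunctions are $P_j^{(0,2)}(\cos\ska s)$; the eigenvalue of the Jacobi operator of degree $j$ with weights $(0,2)$ is $j(j+a+b+1)=j(j+3)=j^2+3j$, which after the shift and rescaling by $\vka$ gives $\vka(j^2+3j+2)$, matching the statement. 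This conjugation by $\cos^2\frac{\ska s}2$ is the single step requiring genuine care, and it is where the factor in the stated eigenfunction $w_1$ comes from.

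The normalization constants $\big(\frac{\vka(2j+1)}{4\pi}\big)^{1/2}$ and $\big(\frac{\vka(2j+3)}{4\pi}\big)^{1/2}$ are then determined by requiring that $U_q$ be unitary, i.e.\ that the eigenfunctions $w_q(\cdot,\la)$ be orthonormal in $L^2((0,\diam(\Si)),\dd\mu_\Si)$. Since $\dd\mu_\Si(s)=2\pi\frac{\sin\ska s}{\ska}\dd s$ and $\sin\ska s\,\dd s\propto\dd(\cos\ska s)=\dd x$, the $L^2(\dd\mu_\Si)$ inner product becomes a weighted integral over $x\in[-1,1]$: for $q=0$ this is the standard Legendre weight (constant weight), with $\int_{-1}^1 P_j(x)^2\,\dd x=\frac{2}{2j+1}$; for $q=1$ the factor $\cos^4\frac{\ska s}2=\big(\frac{1+x}{2}\big)^2$ reproduces the Jacobi weight $(1-x)^0(1+x)^2$ up to a constant, with the known normalization $\int_{-1}^1 (1+x)^2 P_j^{(0,2)}(x)^2\,\dd x=\frac{8}{2j+3}$. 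Matching these against the prefactors in $\dd\mu_\Si$ yields exactly the stated constants. The completeness statement in parts (ii)--(iii) of Proposition~\ref{P:spectral}, and in particular that $\rho_q$ is supported on the full point spectrum with no continuous part, follows from the classical fact that the Legendre and Jacobi polynomials form complete orthogonal bases of the respective weighted $L^2$ spaces; I expect the main obstacle to be not the algebra but carefully verifying that the boundary conditions in $\Dom(T_q)$ coincide with the limit-circle conditions selecting the polynomial (rather than logarithmically singular) solutions, so that no additional spectrum beyond the listed eigenvalues appears.
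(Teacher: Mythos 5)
Your proposal is correct and follows essentially the same route as the paper: pass to the variable $t=\sin^2\frac{\ska s}2$, use the substitution $u=(1-t)v$ for $q=1$, identify the eigenfunctions selected by the boundary conditions with the Legendre and Jacobi polynomials $P_j$ and $P_j^{(0,2)}$, and fix the constants by the standard $L^2$ normalizations. The only cosmetic difference is that the paper detects the eigenvalues via the polynomial-termination condition for the hypergeometric solutions $F\big(\tfrac12\pm(\tfrac14+\La)^{1/2},1;t\big)$ and $F\big(\tfrac32\pm(\tfrac14+\La)^{1/2},1;t\big)$, whereas you recognize the Legendre/Jacobi operators directly in the variable $x=\cos\ska s$; both yield the same spectra and eigenfunctions.
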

\begin{proof}
It is standard that the eigenvalues of $T_q$ are nonnegative and that the normalized eigenfunctions of $T_q$ provide an orthonormal basis of $L^2((0,1))$. The choice of boundary conditions~\eqref{Dom} ensures that the eigenvalues of $L_0$ are the numbers $\La\geq0$ for which the  classical hypergeometric equation
\[
0=(\La-T_0)u=t(1-t)u''+(1-2t)u'+\La u
\]
has a polynomial solution. The solution to the latter equation which is continuous at $0$ is proportional to the hypergeometric function $F(\frac12-({\frac14+\La})^{1/2},\frac12+({\frac14+\La})^{1/2},1;t)$ and this function becomes a polynomial if and only if $({\frac14+\La})^{1/2}-\frac12\in\NN$. Hence $\spec(T_0)=\{j(j+1):j\in\NN\}$. In this case the latter hypergeometric function reduces to a Legendre polynomial, and using the well known formulas for the norm of these polynomials one readily arrives at the expression
\[
\sqrt{2j+1} \: P_j(1-2t)
\]
for the normalized eigenfunctions of $T_0$. By making use of the identification of $L^2((0,\diam(\Si)) , \dd\mu_\Si)$ with $L^2((0,1),\frac{4\pi}{\vka}\dd t)$ via $V_\vka$ we immediately arrive at the above expression for the spectral resolution of $L_0$.

The proof for $L_1$ is analogous. If we set $u(t)=:(1-t)v(t)$, the eigenvalue equation reads as
\begin{equation}\label{Jacobi}
0=(\La-T_1)u=(1-t)\big(t(1-t)v''+ (1-4t)v'+(\La-2)v\big)\,,\qquad u\in\Dom(T_1)\,.
\end{equation}
The eigenvalues of $T_1$ can be easily seen to coincide with the polynomial solutions to the latter equation. As the solution $v$ of~\eqref{Jacobi} regular at $0$ is proportional to $F(\frac32-({\frac14+\La})^{1/2},\frac32+({\frac14+\La})^{1/2},1;t)$, this implies that $({\frac14+\La})^{1/2}-\frac32\in\NN$ and $\spec(T_1)=\{j^2+3j+2:j\in\NN\}$. Hence we immediately obtain the desired formula by writing the resulting eigenfunction function in terms of a Jacobi polynomial, expressing the result in the variable $s$ and  normalizing it to have unit $L^2$ norm.
\end{proof}

When $\Si$ is noncompact, the spectrum of $L_q$ is absolutely continuous and its spectral resolution can be derived using the Weyl--Kodaira theorem.

\begin{lemma}\label{L:spec-}
Let us suppose that $\vka<0$ and set
\[
\al(\La):=\tfrac14+\sqrt{\tfrac14+\La}\,.
\]
Then Proposition~\ref{P:spectral} holds with
\begin{align*}
&\dd\rho_0(\la)=\dd\rho_1(\La)=\frac\vka{4\pi}\tanh\bigg[\pi\Big(-\frac\la\vka-\frac14\bigg)^{1/2}\bigg]\,\dd\la\,,\\
&w_0(s,\la)=\bigg(\cosh\frac{\sqrt{-\vka} s}2\bigg)^{-2\al(\la/\vka)} F\Big(\al(\la/\vka),\al(\la/\vka),1;-\tanh^2\frac{\sqrt{-\vka}s} 2\Big)\,,\\
&w_1(s,\la)=\bigg(\cosh\frac{\sqrt{-\vka} s}2\bigg)^{-2\al(\la/\vka)} F\Big(\al(\la/\vka)+1,\al(\la/\vka)-1,1;-\tanh^2\frac{\sqrt{-\vka}s} 2\Big)\,.
\end{align*}
\end{lemma}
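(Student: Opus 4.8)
The plan is to establish Lemma~\ref{L:spec-} by applying the Weyl--Kodaira theorem to the operators $T_q$ on the half-line $(-\infty,0)$, exactly as was done in the compact case (Lemma~\ref{L:spec+}) but now producing an absolutely continuous spectral measure rather than a discrete sum. First I would recall that by~\eqref{Tq} we have $L_q=\vka\,V_\vka^{-1}T_qV_\vka$, so it suffices to diagonalize $T_q$ and then undo the scaling by $\vka<0$; since $\vka<0$, the half-line eigenvalue $\La$ for $T_q$ corresponds to $\la=\vka\La$, and the spectral bottom $c_{\vka,q}=-\tfrac14\vka$ matches the threshold $\La=-\tfrac14$ below which $\al(\La)$ becomes complex. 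The strategy is to (i) solve the formal eigenvalue equation $(\La-T_q)u=0$ explicitly in the variable $t$, selecting the solution with the correct behavior at the limit-circle endpoint $t=0$; (ii) analyze the asymptotics of this solution as $t\to-\infty$, which is the limit-point endpoint, to extract the Titchmarsh--Weyl $m$-function; and (iii) read off the spectral density $\dd\rho_q$ from the jump of the $m$-function across the continuous spectrum.

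For step (i), the equation $(\La-T_0)u=t(1-t)u''+(1-2t)u'+\La u=0$ is hypergeometric, and the solution regular at $0$ is $F\bigl(\tfrac12-(\tfrac14+\La)^{1/2},\,\tfrac12+(\tfrac14+\La)^{1/2},1;t\bigr)$, exactly as in the proof of Lemma~\ref{L:spec+}. The substitution $u=(1-t)v$ again reduces the $T_1$ case to a shifted hypergeometric equation whose regular solution involves the parameters shifted by $1$. The asserted change of variable to $-\tanh^2\tfrac{\sqrt{-\vka}s}2$ and the prefactor $(\cosh\tfrac{\sqrt{-\vka}s}2)^{-2\al}$ arise from a standard quadratic (Pfaff/Euler) hypergeometric transformation that maps the singularity at $t=-\infty$ to a regular point; I would verify that under $t=-\sinh^2\tfrac{\sqrt{-\vka}s}{2}$ (equivalently the arcsin-to-arcsinh continuation coming from $\vka<0$) the stated closed forms for $w_0$ and $w_1$ indeed solve $L_qw_q=\la w_q$ and satisfy the endpoint condition $\lim_{t\uparrow0}t\,u'(t)=0$ built into $\Dom(T_q)$.

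The heart of the argument, and the step I expect to be the main obstacle, is step (iii): computing the explicit spectral density and obtaining the $\tanh$ factor in $\dd\rho_0=\dd\rho_1=\tfrac{\vka}{4\pi}\tanh\bigl[\pi(-\tfrac{\la}{\vka}-\tfrac14)^{1/2}\bigr]\dd\la$. Concretely, for $\La>-\tfrac14$ one writes the regular solution asymptotically as a combination of the two Frobenius solutions at infinity with exponents $\al(\La)$ and $1-\al(\La)$ (equivalently $\tfrac12\pm(\tfrac14+\La)^{1/2}$); the connection coefficients are ratios of Gamma functions supplied by the hypergeometric connection formula. The Weyl--Kodaira theory then identifies $\dd\rho_q$ with $\tfrac{1}{\pi}\,\Im\,m(\la+i0)$, and the $m$-function is a quotient of these Gamma-function connection coefficients. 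Using the reflection formula $\Gamma(z)\Gamma(1-z)=\pi/\sin\pi z$ together with the fact that along the continuous spectrum the relevant argument $(\tfrac14+\La)^{1/2}$ becomes purely imaginary, the $\sin$ of an imaginary argument produces the hyperbolic tangent, which is precisely the source of the $\tanh$ in the statement. I would carry out this Gamma-function bookkeeping for $T_0$ first and then check that the shift by $1$ in the $T_1$ parameters, combined with the extra factor $(1-t)$ in the eigenfunction, leaves the density unchanged (as the statement asserts $\rho_0=\rho_1$); verifying this invariance, and correctly tracking the two-to-one nature of the map $\la\mapsto(\tfrac14+\La)^{1/2}$ so as to land on the branch with nonnegative imaginary part, is the delicate part of the computation.
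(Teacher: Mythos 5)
Your proposal follows essentially the same route as the paper: solve the hypergeometric eigenvalue equation, select the solution satisfying the boundary condition at the limit-circle endpoint $t=0$ (the paper writes it directly in the Pfaff-transformed variable $t/(1-t)$, which is your quadratic-transformation step), express the square-integrable solution at the limit-point endpoint via Gamma-function connection coefficients, and extract the spectral density from the Weyl--Kodaira formula, the $\tanh$ arising from the reflection formula with imaginary argument exactly as you predict. The only cosmetic difference is that the paper packages the density as $k_1(\La)/\bigl(2\pi\iu\,\widetilde W(w,w^+)\bigr)$, computing the reduced Wronskian from the logarithmic singularity of $w^+$ at $0$, rather than as the boundary value of a Titchmarsh--Weyl $m$-function; these are equivalent.
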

\begin{proof}
It is a standard spectral-theoretic result that the spectrum of $T_0$, which is is absolutely continuous, is given by $(-\infty,-\frac14]$. Hence let us take a complex number $\La$ with $\Re\La\leq-\frac14$ and nonzero imaginary part and consider the equation
\begin{equation}\label{L-T0}
0=(\La-T_0)u=t(1-t)u''+(1-2t)u'+\La u\,,
\end{equation}
where $t\in\RR^-$ and $T_0$ is to be interpreted as a formal differential operator.
Then the solutions of~\eqref{L-T0} satisfying the boundary condition required in $\Dom(T_0)$ at $0$ are proportional to
\[
w(t,\La):=(1-t)^{-\al(\La)}F\Big(\al(\La),\al(\La),1;\frac t{1-t}\Big)\,,
\]
and those which are square integrable at infinity are proportional to
\begin{align*}
w^-(t,\La)&:=(1-t)^{-\al(\La)} F\Big(\al(\La),\al(\La),2\al(\La);\frac1{1-t}\Big)\qquad &\text{if }\,\Im\La<0\,,\\
w^+(t,\La)&:=(1-t)^{\al(\La)-1} F\Big(1-\al(\La),1-\al(\La),2-2\al(\La);\frac1{1-t}\Big)\qquad &\text{if }\,\Im\La>0\,.
\end{align*}
For notational simplicity we shall henceforth write $\al$ instead of
$\al(\La)$. It should be noticed that the determination of the
square root ensures that these functions depend analytically on
$\La$ in the region $\Re\La<-\frac14$. It is well known~\cite{AS70}
that one can write e.g.\ $w^+$ as a linear combination of $w$ and
$w^-$ as $w^+(t,\La)=k_1(\La)\, w(t,\La)+ k_2(\La)\, w^-(t,\La)$,
with
\[
k_1(\La):=\frac{\Ga(\al)^2}{\Ga(2\al-1)}\,,\qquad k_2(\La):=-\frac{\Ga(\al)^2\Ga(1-2\al)}{\Ga(1-\al)^2\Ga(2\al-1)}\,.
\]

Let us use the notation
\[
\widetilde W(u,v):=t(1-t)\,\big( u'(t)v(t)-u(t)v'(t)\big)
\]
for the reduced Wronskian of two solutions $u$ and $v$ of Eq.~\eqref{L-T0}, which is actually constant. The Weyl--Kodaira theorem~\cite{DS88} can be used to prove that the self-adjoint operator $T_0$ admits a spectral decomposition analogous that of Proposition~\ref{P:spectral}, where $w$ plays the role of the function $w_q$ and $\dd\rho_q$ must be replaced by the absolutely continuous Borel measure
\[
\dd\rho(\La):=\frac{k_1(\La)}{2\pi\I\,\widetilde W(w(\cdot,\La),w^+(\cdot,\La))}\dd\La
\]
on $(-\infty,-\frac14]$. By~\cite[15.3.10]{AS70} one has that the function $w^+(\cdot,\La)$ can be written as
\[
w^+(t,\La)=-\frac{\Gamma(2-2\al)}{\Gamma(1-\al)^2}\log(-t)+\varphi(t,\La)\,,
\]
where $\varphi(\cdot,\La)$ is of class $C^1$ in a neighborhood of $0$. As $\widetilde W(w(\cdot,\La),w^+(\cdot,\La))$ is constant, we immediately arrive at the formula
\[
W(w(\cdot,\La),w^+(\cdot,\La))=\frac{\Gamma(2-2\al)}{\Gamma(1-\al)^2}
\]
for the reduced Wronskian, which in turn yields the expression
\begin{equation}\label{rho}
\dd\rho(\La)=\tanh\bigg[\pi\sqrt{-\La-\tfrac14}\bigg]\,\dd\La
\end{equation}
for the measure $\rho$. If now use the identification of $L^2((0,\infty),\dd\mu_\Si)$ with $L^2((-\infty,0),\frac{4\pi}\vka\dd t)$ we readily arrive at the formulas for the spectral decomposition of $L_0$.

The analysis of $L_1$ is similar. Setting $u(t)=:(1-t)v(t)$ in the equation $(\La-T_1)u=0$ as in Lemma~\ref{L:spec+} we find that the regular solution of this equation at $0$ is
\[
w(t,\La):=(1-t)^{-\al}F\Big(\al+1,\al-1,1,\frac t{1-t}\Big)
\]
up to a multiplicative constant, and that the solution which is square integrable at infinity is proportional to
\begin{align*}
w^-(t,\La)&:=(1-t)^{-\al} F\Big(\al+1,\al-1,2\al;\frac1{1-t}\Big)\qquad &\text{if }\,\Im\La<0\,,\\
w^+(t,\La)&:=(1-t)^{\al-1} F\Big(2-\al,-\al,2-2\al;\frac1{1-t}\Big)\qquad &\text{if }\,\Im\La>0\,.
\end{align*}
Since the reduced Wronskian of $w$ and $w^+$ is
\[
\widetilde W(w(\cdot,\La),w^+(\cdot,\La))=\frac{\Ga(2-2\al)}{\Ga(2-\al)\Ga(-\al)}
\]
and
\[
w^+(t,\La)=\frac{\Ga(\al+1)\Ga(\al-1)}{\Ga(2\al-1)}\bigg[w(t,\La)-\frac{\Ga(1-2\al)}{\Ga(-\al)\Ga(2-\al)}w^-(t,\La)\bigg]\,,
\]
the same reasoning as above shows that the spectral measure $\dd\rho$ associated with $T_1$ is also given by Eq.~\eqref{rho}.
\end{proof}

As stated in Proposition~\ref{P:spectral}, the integral
representation~(\ref{L2conv}) converges to $g(L_q)u$ in the norm
topology, and by Egorov's theorem this implies that the latter
integral converges uniformly except on a subset of arbitrarily small
measure. We find it convenient to conclude this section with another
simple but useful observation concerning the pointwise convergence
of previously defined the spectral decompositions.

\begin{lemma}\label{L:unif}
Let $U_q$, $w_q$, $c_{q,\vka}$ and $\rho_q$ be defined as in Proposition~\ref{P:spectral}, and let $u\in C^\infty_0([0,\diam(\Si)))$. Then
\begin{equation}\label{L2conv2}
u(s)=\int_{c_{q,\vka}}^{\infty} w_q(s,\la)\,U_qu(\la)\,\dd\rho_q(\la)
\end{equation}
pointwise for all $s\in[0,\diam(\Si))$, and the convergence is uniform.
\end{lemma}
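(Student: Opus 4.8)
The plan is to upgrade the $L^2$ convergence of Proposition~\ref{P:spectral} to absolute and uniform convergence of~\eqref{L2conv2}, and then to identify the sum with $u$ by a soft continuity argument. Everything rests on two quantitative inputs: a rapid decay in $\la$ of the spectral coefficients $U_qu(\la)$, and a bound on the generalized eigenfunctions $w_q(s,\la)$ that grows at most polynomially in $\la$ and is uniform in $s$. To produce decay I would integrate by parts in the self-adjoint (Sturm--Liouville) realization of $L_q$, using $L_qw_q(\cdot,\la)=\la\,w_q(\cdot,\la)$, to obtain
\[
\la\,U_qu(\la)=\int_0^{\diam(\Si)}(L_qu)(s)\,\overline{w_q(s,\la)}\,\dd\mu_\Si(s)\,.
\]
The endpoint contributions vanish: at the right endpoint because $u$ is compactly supported in $[0,\diam(\Si))$, and at $s=0$ because $w_q(\cdot,\la)$ is the regular solution, so the Wronskian-type boundary term, carrying the weight $\dd\mu_\Si/\dd s\sim 2\pi s$ against a bounded bracket, tends to $0$.

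The crux, and the main obstacle, lies precisely at the regular singular point $s=0$, where $L_0=-\pd^2/\pd s^2-\ska\cot(\ska s)\,\pd/\pd s$ behaves like the planar radial Laplacian $-\pd^2/\pd s^2-s^{-1}\pd/\pd s$. Consequently, for a general $u\in C^\infty_0([0,\diam(\Si)))$ the function $L_qu$ has a singularity $\sim u'(0)/s$ at the origin; this is $\dd\mu_\Si$-integrable but not square-integrable, which blocks the naive iteration $\la^m U_qu=U_q(L_q^mu)$. (When $u'(0)=0$, i.e.\ $u$ is even at $0$ --- the case that actually arises in Theorems~\ref{T:mainR} and~\ref{T:mainLor}, where $u$ is a spherical mean and hence an even function of the radius --- the obstruction disappears, $L_q^mu\in L^2$ for every $m$, and rapid decay is immediate from the unitarity of $U_q$.) In general I would split $L_qu=v_{\mathrm{reg}}+v_{\mathrm{sing}}$ with $v_{\mathrm{sing}}:=-u'(0)\,\chi(s)/s$ for a smooth cutoff $\chi$ supported near $0$, so that $v_{\mathrm{reg}}\in L^2((0,\diam(\Si)),\dd\mu_\Si)$. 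Then $U_qv_{\mathrm{reg}}\in L^2(\dd\rho_q)$ by Proposition~\ref{P:spectral}, while $\int v_{\mathrm{sing}}\,\overline{w_q}\,\dd\mu_\Si$ is controlled by the oscillation of $w_q$ near $0$ (the known asymptotics of the Legendre, Jacobi and conical functions of Lemmas~\ref{L:spec+} and~\ref{L:spec-} give $\int_0^\epsilon\chi(s)\,w_q(s,\la)\,\dd s=O((1+|\la|)^{-1/4})$). Altogether this yields $|U_qu(\la)|\leq C(1+|\la|)^{-1}\big(|b(\la)|+(1+|\la|)^{-1/4}\big)$ for large $\la$, with $b\in L^2(\dd\rho_q)$; the finitely many low eigenvalues contribute harmlessly.

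For the second input I would read off from the explicit formulas of Lemmas~\ref{L:spec+} and~\ref{L:spec-} a uniform bound $|w_q(s,\la)|\leq C(1+|\la|)^{1/4}$ when $\vka>0$ --- the normalizing factor $\sqrt{j}$ against $|P_j(\cos\ska s)|\leq1$, with the maximal growth attained exactly at $s=0$ --- and $|w_q(s,\la)|\leq C$ when $\vka<0$, the latter from the Mehler--Dirichlet representation of the conical functions together with the decay of $(\cosh\tfrac{\sqrt{-\vka}s}2)^{-1/2}$. Feeding both estimates into Cauchy--Schwarz, and using that $\rho_q$ grows at most polynomially (atoms at $\la_j\sim\vka\,j^2$ when $\vka>0$, bounded density when $\vka<0$), the integral~\eqref{L2conv2} is dominated by $C(\int(1+|\la|)^{-3/2}\dd\rho_q)^{1/2}\|b\|+C\int(1+|\la|)^{-1}\dd\rho_q$ in the compact case and its obvious analogue in the noncompact case; in each case the exponents are summable/integrable, and since the bound is independent of $s$ the convergence is absolute and uniform on $[0,\diam(\Si))$.

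Finally, each truncation $\int_{c_{q,\vka}}^{\Lambda}w_q(s,\la)\,U_qu(\la)\,\dd\rho_q(\la)$ is continuous in $s$, so by uniform convergence the right-hand side of~\eqref{L2conv2} defines a continuous function of $s$; taking $g\equiv1$ in Proposition~\ref{P:spectral} identifies it with $U_q^{-1}U_qu=u$ in $L^2((0,\diam(\Si)),\dd\mu_\Si)$, hence $\dd\mu_\Si$-a.e., and as both sides are continuous they agree at every $s$. I expect the only real difficulty to be the borderline behaviour at $s=0$: it is there that $w_q$ is largest in $\la$ and simultaneously $L_qu$ is least regular, so that uniform convergence genuinely requires the oscillatory cancellation extracted by the splitting above, rather than crude absolute-value estimates.
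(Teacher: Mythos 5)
Your argument is, at its core, the same as the paper's: rapid decay of $U_qu(\la)$ obtained by moving powers of $L_q$ onto $u$, a bound on $w_q(s,\la)$ that is polynomial in $\la$ and uniform in $s$ (read off from Lemmas~\ref{L:spec+} and~\ref{L:spec-}), Cauchy--Schwarz against the polynomially growing measure $\rho_q$ to get absolute and uniform convergence, and continuity to upgrade the $L^2$ identity $U_q^{-1}U_qu=u$ to pointwise equality. Where you genuinely depart from the paper is at the endpoint $s=0$, and your concern is justified: the paper disposes of the decay step with the bare assertion that $C^\infty_0\subset\Dom(L_q^m)$ for all $m$, which, exactly as you note, fails for a function of $s$ with $u'(0)\neq0$, since then $L_qu\sim-u'(0)/s$ is not in $L^2(\dd\mu_\Si)$ near the origin. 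The assertion is correct precisely for functions that are smooth in $t=\sin^2\frac{\ska s}{2}$, i.e.\ extend to even smooth functions of $s$; this covers every use of the lemma in Theorems~\ref{T:mainR} and~\ref{T:mainLor} (spherical means are even in the radius), as you yourself point out, but not the class $C^\infty_0([0,\diam(\Si)))$ as literally written. Your splitting of $L_qu$ into a regular part plus $-u'(0)\chi(s)/s$, with the oscillatory estimate on $\int\chi\,w_q\,\dd s$, is therefore not a redundant precaution but an actual repair of the proof in the stated generality (the cheaper fix is to add the hypothesis that $u$ be even at $0$, which costs nothing downstream). One caveat: your uniform bound $|w_1(s,\la)|\leq C(1+\la)^{1/4}$ for $\vka>0$ does not follow from ``$|P_j|\leq1$'' alone, since the Jacobi polynomial $P_j^{(0,2)}$ has sup norm of order $j^2$; you need either the weighted estimate $\sup_x\,(1+x)\,|P_j^{(0,2)}(x)|=O(1)$ or simply the crude bound $|w_1|=O(\la^{5/4})$, which is what the paper effectively uses and is still ample once $U_qu$ decays rapidly.
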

\begin{proof}
For the sake of concreteness we restrict ourselves to the case $\vka>0$. As $C^\infty_0([0,1)) \subset\Dom(L_q^m)=V_\vka^{-1}(\Dom(T_q^m))$ for any nonnegative integer $m$, it follows that the integral
\begin{equation}\label{boundUu}
(L_q^mu,u)=\int_{c_{q,\vka}}^{\infty} \la^m \, \big|U_qu(\la)\big|^2\,\dd\rho_q(\la)<\infty
\end{equation}
must convergent for all $m$ and $u\in C^\infty_0([0,\diam(\Si)))$.

It is well known~\cite{AS70} that the Legendre and Jacobi polynomials satisfy
\begin{equation*}
\big|P_j(\xi)\big|\leq1\,,\qquad \big|P_j^{(0,2)}(\xi)\big|\leq\frac{(j+2)(j+1)}2
\end{equation*}
for all $\xi\in[-1,1]$. By the explicit expression for $w_q$, this immediately implies that
\begin{align*}
\bigg|\int_{c_{q,\vka}}^{\infty} w_q(s,\la)\,U_qu(\la)\,\dd\rho_q(\la)\bigg|&\leq \int_{c_{q,\vka}}^{\infty}(c_1\la+c_2)^{1/2}\big|U_qu(\la)\big|\,\dd\rho_q(\la)<\infty\,,
\end{align*}
which converges by virtue of Eq.~\eqref{boundUu}. Here $c_1$ and $c_2$ are positive constants. Hence the integral~\eqref{L2conv2} converges uniformly if the support of $u$ is contained in $[0,K]$ with some $K<\diam(\Si)$. (The same argument also yields convergence in the $C^k$ strong topology). When $\vka\leq0$ the proof is analogous and will be omitted.
\end{proof}

\section*{Acknowledgements}

A.E.\ acknowledges the partial financial support by the DGI and the
Complutense University--CAM under grants no.~FIS2005-00752
and~GR69/06-910556, and thanks McGill University, Montr\'eal, for
hospitality and support. The research of N.K. is supported by NSERC
grant RGPIN 105490-2004.

\end{document}